\definecolor{blue(munsell)}{rgb}{0.0, 0.5, 0.69}
\DeclareDocumentCommand\issue{g}{\todo[size=\footnotesize,color = green!40]{Issue\IfNoValueF{#1}{: #1}}}
\DeclareDocumentCommand\tobedone{g}{\todo[size=\footnotesize,color = yellow!50]{To do\IfNoValueF{#1}{: #1}}}
\DeclareDocumentCommand\notationissue{g}{\todo[size=\footnotesize,color = red!30]{Notation?\IfNoValueF{#1}{: #1}}}
\DeclareDocumentCommand\doubt{g}{\todo[size=\footnotesize,color = blue!10]{Doubt\IfNoValueF{#1}{: #1}}}
\DeclareDocumentCommand\observation{g}{\todo[size=\footnotesize,color = orange!10]{Observation\IfNoValueF{#1}{: #1}}}
\g@addto@macro\bfseries{\boldmath}
\theoremstyle{definition}
\newtheorem{thm}{Theorem}[section]
\newtheorem*{thm*}{Theorem}
\newtheorem*{structure*}{Structure}
\newtheorem{prop}[thm]{Proposition}
\newtheorem{lem}[thm]{Lemma}
\newtheorem{con}[thm]{Construction}
\newtheorem{defn}[thm]{Definition}
\newtheorem{rem}[thm]{Remark}
\newtheorem{exa}[thm]{Example}
\newcommand\lan{\mathsf{lan}}
\newcommand\ran{\mathsf{ran}}
\newcommand\Set{\operatorname{\bf Set}}
\newcommand\op{\circ}
\newcommand\ca{\mathcal {A}}
\newcommand\cb{\mathcal {B}}
\newcommand\cc{\mathcal {C}}
\newcommand\cf{\mathcal {F}}
\newcommand\cg{\mathcal {G}}
\newcommand\ce{\mathcal {E}}
\newcommand\ck{\mathcal {K}}
\newcommand\cp{\mathcal {P}}
\newcommand\cy{\mathcal {Y}}
\DeclareFontFamily{U}{min}{}
\DeclareFontShape{U}{min}{m}{n}{<-> udmj30}{}
\newcommand\yo{\!\text{\usefont{U}{min}{m}{n}\symbol{'210}}\!}
\title{General facts on the Scott adjunction}
\author{Ivan Di Liberti$^\dag$}
\thanks{$^\dag$ This research was mostly developed during the PhD studies of the author and has been supported through the grant 19-00902S from the Grant Agency of the Czech Republic. The finalization of this research has been supported by the GACR project EXPRO 20-31529X and RVO: 67985840.}
\address{
\newline Ivan \textsc{Di Liberti}\newline
Institute of Mathematics\newline
Czech Academy of Sciences\newline
\v{Z}itn\'{a} 25, Prague, Czech Republic\newline
diliberti.math@gmail.com\newline
}
\begin{document}

\begin{abstract}
We introduce, comment and develop the Scott adjunction, mostly from the point of view of a category theorist. Besides its technical and conceptual aspects, in a nutshell we provide a categorification of the Scott topology over a posets with directed suprema. From a technical point of view we establish an adjunction between accessible categories with directed colimits and Grothendieck topoi. We show that the bicategory of topoi is enriched over the $2$-category of accessible categories with directed colimits and it has tensors with respect to this enrichment. The Scott adjunction (re-)emerges naturally from this observation.

 \smallskip \noindent \textbf{Keywords.} topoi, accessible categories, Scott topology, enriched categories, Scott adjunction.
\end{abstract}
\maketitle
\setcounter{tocdepth}{1}
{
  \hypersetup{linkcolor=black}
  \tableofcontents
}

\section{Introduction}\label{promenade}

In the 1980's Scott's work on dcpos has deeply impacted domain theory \cite{abramsky1994domain}, with motivations coming from theoretical computer sciences. Among his contributions, he introduced the \textit{Scott topology} on a poset with directed colimits. Given such a poset, $(P, \leq)$ we can define a topology on $P$ where opens are precisely those subsets whose characteristic function preserves directed joins. This construction amounts to a functor from the category of posets with directed colimits to the category of locales, \[\mathsf{S}: \text{Pos}_\omega \to \text{Loc} \] which assigns to each poset with directed colimits, its frame of Scott opens. Since its introduction, the Scott topology on a poset has been studied in deep detail. Yet, the study of the functorial properties of this construction did not have much luck. This becomes even more evident when we notice that the Scott construction happens to be the left adjoint of the functor of points $\mathsf{pt}: \text{Loc} \to \text{Pos}_\omega$, \[\mathsf{S} : \text{Pos}_\omega \leftrightarrows  \text{Loc}: \mathsf{pt}.\]
Indeed the crude set of points of a locale admits a structure of poset with directed colimits \cite{Vickers2007}, and the Scott construction offers a left adjoint for this assignement. Surprisingly, such observation has never appeared in the literature. Our task is to categorify the Scott construction. From a technical point of view we establish a biadjunction between accessible categories with directed colimits and Grothendieck topoi, \[\mathsf{S} : \text{Acc}_\omega \leftrightarrows  \text{Topoi}_\omega: \mathsf{pt}.\] We study its main properties and try to unveil its ultimate (categorical) nature.

Our analysis leads to the main result that the bicategory of topoi is enriched over accessible categories with directed colimits, and we rediscover the Scott adjunction as an instance of being tensored. 

\begin{thm*}[{\Cref{accmonoidalclosed}, \Cref{enrichment} and \Cref{tensored}}]
The $2$-category $\text{Acc}_\omega$ is monoidal closed, having the expected internal homs. The bicategory $\text{Topoi}$ is enriched and tensored over $\text{Acc}_\omega$.
\end{thm*}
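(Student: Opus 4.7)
The theorem splits into three subsidiary results, and my plan would follow that division.

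For the monoidal closure of $\text{Acc}_\omega$, I would take as internal hom $[\ca, \cb]$ the category of accessible functors $\ca \to \cb$ that preserve directed colimits, with natural transformations as morphisms. The first step is to verify this is itself an object of $\text{Acc}_\omega$: limits and directed colimits are computed pointwise, and accessibility follows from standard results about functor categories between accessible categories (in the spirit of Makkai--Paré). I would then define $\ca \otimes \cb$ by the universal property of classifying bimorphisms -- functors $\ca \times \cb \to \cc$ that are accessible and preserve directed colimits separately in each variable -- and establish existence via an adjoint-functor-theorem argument, using accessibility of the internal hom to supply the needed solution-set condition. The monoidal unit is the free $\text{Acc}_\omega$-object on a point, which one checks is $\Set$ itself.

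For the enrichment of $\text{Topoi}$ over $\text{Acc}_\omega$, the task is to show that the hom-category $\text{Topoi}(\ce, \cf)$ of geometric morphisms is an object of $\text{Acc}_\omega$. Directed colimits of geometric morphisms can be computed as directed limits of the corresponding inverse-image functors, which remain left-exact left adjoints (hence again define geometric morphisms). For accessibility I would view geometric morphisms $\ce \to \cf$ as $\cf$-valued models of the geometric theory classified by $\ce$; that categories of such models are accessible is classical. Compatibility of composition with the $\text{Acc}_\omega$-structure in each variable is then routine from these pointwise descriptions.

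For tensors, the universal property to establish is $\text{Topoi}(\ca \otimes \cf, \cg) \simeq \text{Acc}_\omega(\ca, \text{Topoi}(\cf, \cg))$. Specialising to $\cf = \Set$ (the terminal topos) reduces to the Scott adjunction $\mathsf{S} \dashv \mathsf{pt}$ already set up, so one expects $\mathsf{S}(\ca) = \ca \otimes \Set$, and the Scott adjunction will re-emerge as the tensor with the unit of the enrichment. For general $\cf$ my strategy is to relativise the Scott construction to the base $\cf$: presenting $\ca$ via a sketch, one forms the classifying topos of that sketch internally in $\cf$. An abstract alternative is to invoke cocompleteness properties of $\text{Topoi}$ together with the internal hom from the previous step and construct the tensor by an adjoint functor theorem.

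The main obstacle lies in this last step: verifying that $\ca \otimes \cf$ is genuinely a Grothendieck topos and that the desired universal property holds. Even the bare tensor of two accessible categories with directed colimits is delicate, since accessibility rank need not be well-controlled under such constructions, and lifting this through the Scott adjunction to produce a topos requires careful bookkeeping of how $\mathsf{S}$, $\mathsf{pt}$, and the internal-hom structures interact. Once this is in hand, the enriched-tensored structure and the re-interpretation of $\mathsf{S}$ as tensoring with $\Set$ should follow uniformly.
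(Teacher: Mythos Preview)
Your three-part decomposition matches the paper's, but there is one concrete error and one place where you miss the key construction.

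The error: the monoidal unit of $(\text{Acc}_\omega, \otimes)$ is not $\Set$ but the terminal category $\mathbf{1}$. This is visible already from the closed structure: $[\mathbf{1}, \cc] \simeq \cc$, so $\ca \otimes \mathbf{1} \simeq \ca$; by contrast $[\Set, \cc] \simeq \cc^{\text{FinSet}}$ since $\Set \simeq \mathsf{Ind}(\text{FinSet})$, which is essentially never $\cc$. The claim that the ``free $\text{Acc}_\omega$-object on a point'' is $\Set$ is therefore wrong.

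The missed insight concerns the tensor on $\text{Topoi}$. You propose to build $\ca \boxtimes \cf$ either by forming a classifying topos of a sketch internally in $\cf$, or abstractly via an adjoint functor theorem, and you flag this step as the main obstacle. The paper's route is both simpler and more direct: one \emph{defines} $\ca \boxtimes \ce := \text{Acc}_\omega(\ca, \ce)$, so that the Scott construction $\mathsf{S}(\ca) = \text{Acc}_\omega(\ca, \Set)$ is literally the special case $\ce = \Set$. That this is a Grothendieck topos follows by the same argument used for $\mathsf{S}(\ca)$: if $\ca$ is $\lambda$-accessible then $\text{Acc}_\omega(\ca, \ce)$ sits coreflectively inside $\ce^{\ca_\lambda}$ with a left exact associated comonad (because directed colimits commute with finite limits in any topos $\ce$, not just in $\Set$), and is therefore a topos. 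The universal property $\text{Topoi}(\ca \boxtimes \ce, \cf) \simeq \text{Acc}_\omega(\ca, \text{Topoi}(\ce, \cf))$ then reduces to a short chain of equivalences swapping the $\text{Cocontlex}$ and $\text{Acc}_\omega$ variables. No internal sketch theory and no adjoint functor theorem are needed, and the bookkeeping difficulties you anticipate do not arise.

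For the tensor product in $\text{Acc}_\omega$ itself the paper also gives an explicit construction (inside small presheaves on $\ca \times \cb$, in the style of Kelly) rather than invoking an adjoint functor theorem; your AFT plan there is not unreasonable, but the explicit construction avoids having to verify a solution-set condition.
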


Together with \cite{thgeo}, \cite{thlo}, this is one of three papers which accounts on the content of the author's Ph.D thesis. This paper is devoted to settle the categorical framework in which the theory is developed. The qualitative content of the adjunction is twofold. On one hand it has a very clean geometric interpretation, whose roots belong to Stone-like dualities and Scott domains. On the other, it can be seen as a syntax-semantics duality between formal model theory and geometric logic. Those points of view will be separately inspected in \cite{thgeo} and \cite{thlo}. 

\subsection{Our contribution} This paper has a predecessor, namely a joint collaboration of the author together with Simon Henry \cite{simon}. There, Henry uses the technology offered by the Scott adjunction to solve an axiomatizabiliy question asked by Jiří Rosický at the international conference Category Theory 14.  Our \Cref{promenade} is refined and more friendly presentation of the results contained in \cite{simon}. The main contribution of this paper are \Cref{sec3} and \Cref{toolbox}. \Cref{sec3} puts the Scott adjunction into perspective, recovering the adjunction from the fact that Topoi are enriched over accessible categories. \Cref{toolbox} is intended to be a collection of relevant and technical results on the adjunction that are rather useful for the applications in \cite{thgeo} and \cite{thlo}.

\subsection{Structure}
\begin{enumerate}
	\item[Sec. 2] accounts on \textit{the Scott construction}, mapping an accessible category with directed colimits to its \textit{Scott topos} (\Cref{defnS}), $\ca \mapsto \mathsf{S}(\ca).$ This amounts to a functor which is  left adjoint to taking the category of points of a topos $\ce \mapsto \mathsf{pt}(\ce).$
	\item[Sec. 3] makes a step back and studies some $2$-categorical property of the $2$-category of topoi, showing that Topoi is enriched over the $2$-category of accessible categories with directed colimits and it has tensors with respect to this enrichment ({\Cref{accmonoidalclosed}, \Cref{enrichment} and \Cref{tensored}}). The Scott adjunction (re-)emerges naturally from this observation (\Cref{ssrevisited}). The section also discusses the existence of certain cotensors (\Cref{powerspowers}).
	\item[Sec. 4] accounts on a collection of technical properties of the functors $\mathsf{S}$ and $\mathsf{pt}$, such as the preservation of relevant notions of monomorphisms. We introduce and study the notion of \textit{topological embedding} (\Cref{topological}) of accessible categories with directed colimits. Finally, we discuss the behavior of the unit of the Scott adjunction, and connect it to the existence of \textit{finitary representation} (\Cref{ff}). 
	\end{enumerate}

\subsection*{Notations and conventions} \label{backgroundnotations}

Most of the notation will be introduced when needed and we will try to make it as natural and intuitive as possible, but we would like to settle some notation.
\begin{enumerate}
\item $\ca, \cb$ will always be accessible categories, very possibly with directed colimits.
\item $\mathsf{Ind}_\lambda$ is the free completion under $\lambda$-directed colimits.
\item $\ca_{\kappa}$ is the full subcategory of $\kappa$-presentable objects of $\ca$.
\item $\cg, \cf, \ce$ will be Grothendieck topoi.
\item In general, $C$ is used to indicate small categories.
\item $\eta$ is the unit of the Scott adjunction.
\item $\epsilon$ is the counit of the Scott adjunction.
\item A Scott topos is a topos of the form $\mathsf{S}(\ca)$.
\item  $\P(X)$ is the category of small copresheaves of $X$.
\item $\text{Acc}_{\omega}$ is the $2$-category of accessible categories with directed colimits, a $1$-cell is a functor preserving directed colimits, $2$-cells are natural transformations.
\item $\omega$-$\text{Acc}$ is the full ($2$)-subcategory of  $\text{Acc}_{\omega}$ spanned by \textit{finitely} accessible categories.
\item $\text{Topoi}$ is the $2$-category of Grothendieck topoi. A $1$-cell is a geometric morphism and has the direction of the right adjoint. $2$-cells are natural transformation between left adjoints.
\item $\text{Presh}$ is the full  ($2$)-subcategory of  $\text{Topoi}$ spanned by presheaf categories.
\end{enumerate}

\section{The Scott Adjunction}\label{promenade}

In this section we provide enough information to understand the crude statement of the adjunction and we touch on these contextualizations. One could say that this section, together with a couple of results that appear in the Toolbox, is a report of our collaboration with Simon Henry \cite{simon}.

\begin{rem}
 Our presentation enriches Herny's exposition of some useful insights. \Cref{f_*} provides a more explicit description of the functoriality of the Scott construction. \Cref{scottname}, \Cref{duality}, \Cref{Free geometric theory} are dedicated to put the Scott adjunction into a more precise context, both from a conceptual and an historical viewpoint. \Cref{fields} provides an evident example that the Scott adjunction is not a biequivalence of bicategories. \Cref{diagrams} reproduces a standard result of topos theory in our language. Finally \Cref{promenadeproofofscott} provides a more polished and elegant proof of the Scott adjunction. Overall, given the terse style of \cite{simon} we think that this section is the perfect introduction to this technique.
 \end{rem}

\subsection{The Scott adjunction: definitions and constructions} \label{promedadescottadj}


We begin by giving the crude statement of the adjunction, then we proceed to construct and describe all the objects involved in the theorem. The actual proof of \Cref{scottadj} will close the section.

\begin{thm}[{\cite[Prop. 2.3]{simon}} The Scott adjunction]\label{scottadj}
The $2$-functor of points $\mathsf{pt} :\text{Topoi} \to \text{Acc}_{\omega} $ has a left biadjoint $\mathsf{S}$, yielding the Scott biadjunction, $$\mathsf{S} : \text{Acc}_{\omega} \leftrightarrows \text{Topoi}: \mathsf{pt}. $$
\end{thm}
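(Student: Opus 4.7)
My plan is to construct the left biadjoint $\mathsf{S}$ explicitly and then verify the universal property. For $\ca$ in $\text{Acc}_\omega$, I fix a regular cardinal $\lambda$ with $\ca \simeq \mathsf{Ind}_\lambda(\ca_\lambda)$ and set
\[\mathsf{S}(\ca) := \text{Acc}_\omega(\ca, \Set),\]
the category of $\omega$-directed colimit preserving functors $\ca \to \Set$. Since any such functor automatically preserves $\lambda$-directed colimits and $\ca = \mathsf{Ind}_\lambda(\ca_\lambda)$, restriction along $\ca_\lambda \hookrightarrow \ca$ identifies $\mathsf{S}(\ca)$ with a full subcategory of the presheaf topos $[\ca_\lambda, \Set]$: the image is cut out by a sheaf condition for a Grothendieck topology $J$ on $\ca_\lambda$ generated by the $\omega$-directed cocones in $\ca_\lambda$ that become colimiting in $\ca$. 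My first task is to define $J$ precisely and prove $\mathsf{S}(\ca) \simeq \mathsf{Sh}(\ca_\lambda, J)$, which exhibits $\mathsf{S}(\ca)$ as a Grothendieck topos.

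Next I would set up the candidate adjunction unit. Each object $a \in \ca$ produces a functor in $\mathsf{S}(\ca)$: via its $\lambda$-directed presentation $a \simeq \mathrm{colim}\, a_i$ with $a_i \in \ca_\lambda$, one produces an evaluation-style point of $\mathsf{S}(\ca)$, and this gives a canonical $\eta_\ca \colon \ca \to \mathsf{pt}(\mathsf{S}(\ca))$. For the biadjunction proper, I would exhibit the natural equivalence
\[\text{Topoi}(\mathsf{S}(\ca), \ce) \simeq \text{Acc}_\omega(\ca, \mathsf{pt}(\ce))\]
directly: from $g \colon \ca \to \mathsf{pt}(\ce)$ I build an inverse image functor $f^*_g \colon \ce \to \mathsf{S}(\ca)$ sending each $e \in \ce$ to the composite $a \mapsto \widehat{e}(g(a))$, where $\widehat{e} \colon \mathsf{pt}(\ce) \to \Set$ is evaluation at $e$; this is lex and cocontinuous because $\widehat{e}$ is and $g$ preserves directed colimits, so $f_g$ is a geometric morphism $\mathsf{S}(\ca) \to \ce$. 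The inverse passage sends $f \colon \mathsf{S}(\ca) \to \ce$ to $\mathsf{pt}(f) \circ \eta_\ca$, and the triangle identities reduce to checking these two operations cancel on representables.

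The main obstacle is verifying that the reflective subcategory $\mathsf{S}(\ca) \hookrightarrow [\ca_\lambda, \Set]$ is actually a topos, i.e., that the associated sheafification functor is left exact. Reflectivity itself is routine since $\omega$-directed colimit preservation is an accessible condition and presheaf topoi are locally presentable, but left exactness is delicate and is essentially what forces the correct definition of the topology $J$; the argument ultimately rests on the fact that finite limits commute with directed colimits in $\Set$, so that finite pointwise limits of directed-colimit preserving functors remain directed-colimit preserving. Once left exactness is in hand, the $2$-categorical coherence required for a biadjunction (pseudofunctoriality of $\mathsf{S}$ and $\mathsf{pt}$, coherence of unit and counit up to invertible $2$-cell) is standard and can be organized by a bicategorical Yoneda argument once the object-level equivalence of hom-categories is established.
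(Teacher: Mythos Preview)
Your argument for the biadjunction itself is essentially the paper's: the explicit assignment $e \mapsto \bigl(a \mapsto g(a)^*(e)\bigr)$ is exactly what falls out when one unpacks the paper's currying chain
\[
\text{Cocontlex}(\cf,\text{Acc}_\omega(\ca,\Set)) \;\simeq\; \text{Cat}_{\text{cocontlex},\text{acc}_\omega}(\cf\times\ca,\Set) \;\simeq\; \text{Acc}_\omega(\ca,\text{Cocontlex}(\cf,\Set)),
\]
so on that front the two approaches coincide.

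The genuine gap is in your proof that $\mathsf{S}(\ca)$ is a topos. The inclusion $\mathsf{S}(\ca)\hookrightarrow[\ca_\lambda,\Set]$ is \emph{not} reflective, and $\mathsf{S}(\ca)$ is \emph{not} a sheaf subcategory for any topology on $\ca_\lambda$. The image of restriction consists of those $F:\ca_\lambda\to\Set$ preserving the $\lambda$-small $\omega$-filtered colimits that exist in $\ca_\lambda$; this is a \emph{colimit}-preservation condition, not a sheaf (limit) condition. Concretely, the subcategory is closed under all colimits and under finite limits (because filtered colimits commute with finite limits in $\Set$), but it is \emph{not} closed under infinite products: if each $F_j$ preserves a given filtered colimit $\colim_i c_i$, the canonical map $\colim_i\prod_j F_j(c_i)\to\prod_j\colim_i F_j(c_i)$ is typically not an isomorphism. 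Since any reflective subcategory is closed under limits, reflectivity fails, and your appeal to ``accessible condition $\Rightarrow$ reflective'' breaks down precisely here: accessibility of the embedding gives you nothing without closure under limits.

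The correct orientation is the opposite one. Because the inclusion preserves all colimits, the adjoint functor theorem gives a \emph{right} adjoint, so $\mathsf{S}(\ca)$ is \emph{coreflective} in $[\ca_\lambda,\Set]$; since the inclusion also preserves finite limits, the associated comonad is left exact, and then \cite[V.8 Thm.~4]{sheavesingeometry} (coalgebras for a lex comonad on a topos form a topos) yields that $\mathsf{S}(\ca)$ is a Grothendieck topos. This is exactly the paper's argument in \Cref{scottconstructionwelldefined}. Once you replace ``reflective / sheafification'' by ``coreflective / lex comonad'', the rest of your outline goes through.
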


\begin{rem}[Characters on the stage] 
$\text{Acc}_{\omega}$ is the $2$-category of accessible categories with directed colimits, a $1$-cell is a functor preserving directed colimits, $2$-cells are natural transformations. $\text{Topoi}$ is the $2$-category of Grothendieck topoi. A $1$-cell is a geometric morphism and has the direction of the right adjoint. $2$-cells are natural transformation between left adjoints.
 \end{rem}
 
 \begin{rem}[$2$-categorical warnings]\label{ignorepseudo}
Both $\text{Acc}_{\omega}$ and Topoi are  $2$-categories, but most of the time our intuition and our treatment of them will be $1$-categorical, we will essentially downgrade the adjunction to a $1$-adjunction where everything works \textit{up to equivalence of categories}. We feel free to use any classical result about $1$-adjunction, paying the price of decorating any statement with the correct use of the word \textit{pseudo}. For example, right adjoints preserve pseudo-limits, and pseudo-monomorphisms.
 \end{rem}

\begin{rem}[The functor $\mathsf{pt}$]\label{pt}
The functor of points $\mathsf{pt}$ belongs to the literature since quite some time, $\mathsf{pt}$ is the covariant hom functor $\text{Topoi}(\Set, - )$. It maps a Grothendieck topos $\cg$ to its category of points, \[\cg \mapsto \text{Cocontlex}(\cg, \Set).\]
Of course given a geometric morphism $f: \cg \to \ce$, we get an induced morphism $\mathsf{pt}(f): \mathsf{pt}(\cg) \to \mathsf{pt}{(\ce)}$ mapping $p^* \mapsto p^* \circ f^*$. The fact that $\text{Topoi}(\Set, \cg)$ is an accessible category with directed colimits appears in the classical reference by Borceux as \cite[Cor. 4.3.2]{borceux_19943}, while the fact that $\mathsf{pt}(f)$ preserves directed colimits follows trivially from the definition.
\end{rem}

\subsubsection{The Scott construction}

 \begin{con}[The Scott construction]\label{defnS}
We recall the construction of $\mathsf{S}$ from \cite{simon}. Let $\ca$ be an accessible category with directed colimits.  $\mathsf{S}(\ca)$ is defined as the category the category of functors preserving directed colimits into sets. \[\mathsf{S}(\ca) = \text{Acc}_{\omega}(\ca, \Set).\]
For a functor $f: \ca \to \cb$ be a $1$-cell in $\text{Acc}_{\omega}$, the geometric morphism $\mathsf{S}f$ is defined by precomposition as described below.

\begin{center}
\begin{tikzcd}
\ca \arrow[dd, "f"] &  & \mathsf{S}\ca \arrow[dd, "f_*"{name=lower, description}, bend left=49] \\
                    &  &                                                \\
\cb                 &  & \mathsf{S}\cb \arrow[uu, "f^*"{name=upper, description}, bend left=49] 

 \ar[phantom, from=lower, to=upper, shorten >=1pc, "\dashv", shorten <=1pc]
\end{tikzcd}
\end{center}
$\mathsf{S}f = (f^* \dashv f_*)$ is defined as follows: $f^*$ is the precomposition functor $f^*(g) = g \circ f$. This is well defined because $f$ preserve directed colimits. $f^*$ is a functor preserving all colimits between locally presentable categories\footnote{This is shown in \Cref{scottconstructionwelldefined}.} and thus has a right adjoint by the adjoint functor theorem\footnote{Apply the dual of \cite[Thm. 3.3.4]{borceux_1994} in combination with \cite[Thm 1.58]{adamekrosicky94}.}, that we indicate with $f_*$. Observe that $f^*$ preserves finite limits because finite limits commute with directed colimits in $\Set$.
\end{con}

\begin{rem}[$\mathsf{S}(\ca)$ is a topos]\label{scottconstructionwelldefined}
 Together with \Cref{defnS} this shows that the Scott construction provides a $2$-functor $\mathsf{S}: \text{Acc}_{\omega} \to \text{Topoi}$. A proof has already appeared in \cite[2.2]{simon} with a practically identical idea. The proof relies on the fact that, since finite limits commute with directed colimits, the category $\mathsf{S}(\ca)$ inherits from its inclusion in the category of all functors $\ca \to \Set$ all the relevant exactness condition prescribed by Giraud axioms. The rest of the proof is devoted to provide a generator for $\mathsf{S}(\ca)$. In the proof below we pack in categorical technology the proof-line above. 
\end{rem}
\begin{proof}[Proof of \Cref{scottconstructionwelldefined}]
By definition $\ca$ must be $\lambda$-accessible for some $\lambda$. Obviously $\text{Acc}_\omega(\ca, \Set)$ sits inside $\lambda\text{-Acc}(\ca, \Set)$. Recall that $\lambda\text{-Acc}(\ca, \Set)$ is equivalent to $\Set^{\ca_\lambda}$ by the restriction-Kan extension paradigm and the universal property of $\mathsf{Ind}_\lambda$-completion.  This inclusion $i: \text{Acc}_\omega(\ca, \Set) \hookrightarrow \Set^{\ca_\lambda}$, preserves all colimits and finite limits, this is easy to show and depends on one hand on how colimits are computed in this category of functors, and on the other hand on the fact that in $\Set$ directed colimits commute with finite limits. By the adjoint functor theorem, $\text{Acc}_\omega(\ca, \Set)$ amounts to a coreflective subcategory of a topos whose associated comonad is left exact. By \cite[V.8 Thm.4]{sheavesingeometry}, it is a topos.
\end{proof}

\begin{rem}[A description of $f_*$]\label{f_*}
In order to have a better understanding of the right adjoint $f_*$, which in the previous remark was shown to exist via a special version of the adjoint functor theorem, we shall fit the adjunction $(f^* \dashv f_*)$ into a more general picture. We start by introducing the diagram below.

\begin{center}
\begin{tikzcd}
\mathsf{S}\ca \arrow[ddd, "\iota_\ca"] \arrow[rr, "f_*" description, bend right] &  & \mathsf{S}\cb \arrow[ddd, "\iota_\cb"] \arrow[ll, "f^*"'] \\
 &  &  \\
 &  &  \\
{\P(\ca)} \arrow[rr, "\ran_f" description, bend right] \arrow[rr, "\lan_f" description, bend left] &  & {\P(\cb)} \arrow[ll, "f^*" description]
\end{tikzcd}
\end{center}

\begin{enumerate}
\item By $\P(\ca)$ we mean the category of small copresheaves over $\ca$. Observe that the natural inclusion $\iota_\ca$ of $\mathsf{S}\ca$ in $\P(\ca)$ has a right adjoint\footnote{This will be shown in the remark below.} $r_{\ca}$, namely $\mathsf{S}\ca$ is coreflective and it coincides with the algebras for the comonad $\iota_\ca \circ r_\ca$. If we ignore the evident size issue for which $\P(\ca)$ is not a topos, the adjunction $\iota_\ca \dashv r_{\ca}$ amounts to a geometric surjection $r: \P(\ca) \to \mathsf{S}\ca$.

 \item The left adjoint $\lan_f$ to $f^*$ does exist because $f$ preserve directed colimits, while in principle $\ran_f$ may not exists because it is not possible to cut down the size of the limit in the ran-limit-formula. Yet, for those functors for which it is defined, it provides a right adjoint for $f^*$. Observe that since the $f^*$ on the top is the restriction of the $f^*$ on the bottom, and $\iota_{\ca, \cb}$ are fully faithful, $f_*$ has to match with $r_\cb \circ \ran_f \circ \iota_\ca$, when this composition is well defined, \[f_* \cong r_\cb \circ \ran_f \circ \iota_\ca,\]indeed the left adjoint $f^*$ on the top coincides with $f^* \circ \iota_\cb$ and by uniqueness of the right adjoint one obtains precisely the equation above. Later in the text this formula will prove to be useful. We can already use it to have some intuition on the behavior  $f_*$, indeed $f_*(p)$ is the best approximation of $\ran_f(p)$ preserving directed colimits. In particular if it happens for some reason that $\ran_f(p)$ preserves directed colimits, then this is precisely the value of $f_*(p)$.
\end{enumerate}
\end{rem}

\begin{rem}[$\mathsf{S}(\ca)$ is coreflective in $\P(\ca)$] \label{coreflective} We would have liked to have a one-line-motivation of the fact that the inclusion $i_\ca: \mathsf{S}(\ca) \to \P(\ca)$ has a right adjoint $r_\ca$, unfortunately this result is true for a rather technical argument. By a general result of Kelly, $i_\ca$ has a right adjoint if and only if $\lan_{i_\ca}(1_{\mathsf{S}(\ca)})$ exists and $i_\ca$ preserves it. Since $\mathsf{S}(\ca)$ is small cocomplete, if $\lan_{i_\ca}(1_{\mathsf{S}(\ca)})$ exists, it must be pointwise and thus $i$ will preserve it because it is a cocontinuous functor. Thus it is enough to prove that $\lan_{i_\ca}(1_{\mathsf{S}(\ca)})$ exists. Anyone would be tempted to apply \cite[3.7.2]{borceux_1994}, unfortunately $\mathsf{S}(\ca)$ is not a small category. In order to cut down this size issue, we use the fact that $\mathsf{S}(\ca)$ is a topos and thus have a dense generator $j: G \to \mathsf{S}(\ca)$. Now, we observe that \[\lan_{i_\ca}(1_{\mathsf{S}(\ca)}) = \lan_{i_\ca}(\lan_{j}(j))= \lan_{i_\ca \circ j} j.\] Finally, on the latter left Kan extension we can apply \cite[3.7.2]{borceux_1994}, because $G$ is a small category.
\end{rem}

\begin{rem}\label{generatorscotttopos}
Let $\ca$ be a $\lambda$-accessible category, then $\mathsf{S}(\ca)$ can be described as the full subcategory of $\Set^{\ca_\lambda}$ of those functors preserving $\lambda$-small $\omega$-filtered colimits. A proof of this observation can be found in \cite[2.2]{simon}, and in fact shows that $\mathsf{S}(\ca)$ has a generator.
\end{rem}

\subsection{Comments and suggestions} \label{promenadecommentsandsuggestions}

 \begin{rem}[Cameos in the existing literature]\label{scottname}
Despite the name, neither the adjunction nor the construction is due to Scott and was presented for the first time in \cite{simon}. It implicitly appears in special cases both in the classical literature \cite{elephant2} and in some recent developments \cite{anel}. Karazeris introduces the notion of Scott topos of a finitely accessible category $\ck$ in \cite{Karazeris2001}, this notion coincides with $\mathsf{S}(\ck)$, as the name suggests. In \cite{thgeo} we will make the connection with some seminal works of Scott and clarify the reason for which this is the correct categorification of a construction which is due to him. As observed in \cite[2.4]{simon}, the Scott construction is the categorification of the usual Scott topology on a poset with directed joins. This will help us to develop a geometric intuition on accessible categories with directed colimits; they will look like the underlying set of some topological space. We cannot say to be satisfied with this choice of name for the adjunction, but we did not manage to come up with a better name.
\end{rem}

\begin{rem}[The duality-pattern] \label{duality}
A duality-pattern is an adjunction that is contravariantly induced by a dualizing object. For example, the famous dual adjunction between frames and topological spaces \cite[Chap. IX]{sheavesingeometry},
\[  \mathcal{O} : \mathsf{Top}   \leftrightarrows  \mathsf{Frm}^\circ:  \mathsf{pt} \]
is induced by the Sierpinski space $\mathbb{T}$. Indeed, since it admits a natural structure of frame, and a natural structure of topological space the adjunction above can be recovered in the form \[  \mathsf{Top}(-,\mathbb{T}): \mathsf{Top} \leftrightarrows  \mathsf{Frm}^\circ   : \mathsf{Frm}(-,\mathbb{T}).\] Most of the known topological dualities are induced in this way. The interested reader might want to consult \cite{porst1991concrete}. Makkai has shown \cite{Makkai-Pitts,makkai88} that relevant families of syntax-semantics dualities can be induced in this way using the category of sets as a dualizing object.
In this fashion, the content of  \Cref{defnS} together with \Cref{pt} acknowledges that $\mathsf{S} \dashv \mathsf{pt}$ is essentially induced by the object $\Set$ that inhabits both the $2$-categories.
\end{rem}

\begin{rem}[Generalized axiomatizations] \label{Free geometric theory}
As was suggested by Joyal, the category $\text{Logoi} = \text{Topoi}^{\op}$ can be seen as the category of geometric theories. Caramello \cite{caramello2010unification} pushes on the same idea stressing the fact that a topos is a Morita-equivalence class of geometric theories. In this perspective the Scott adjunction, which in this case is a dual adjunction \[\text{Acc}_{\omega} \leftrightarrows \text{Logoi}^{\op}, \] presents $\mathsf{S}(\ca)$ as a free geometric theory attached to the accessible category $\ca$ that is willing to axiomatize $\ca$. When $\ca$ has a faithful functor preserving directed colimits into the category of sets, $\mathsf{S}(\ca)$ axiomatizes an envelope of $\ca$, as will be shown in \Cref{ff} and \Cref{fff}. A logical understanding of the adjunction will be developed in \cite{thlo}, where we connect the Scott adjunction to the theory of classifying topoi and to the seminal works of Lawvere and Linton in categorical logic. This intuition will be used also to give a topos theoretic approach to abstract elementary classes.
\end{rem}

\begin{rem}[Trivial behaviors and Diaconescu]\label{trivial}
 If $\ck$ is a finitely accessible category, $\mathsf{S}(\ck)$ coincides with the presheaf topos $\Set^{\ck_{\omega}}$, where we indicated with $\ck_{\omega}$ the full subcategory of finitely presentable objects. This follows directly from the following chain of equivalences, \[\mathsf{S}(\ck) = \text{Acc}_{\omega}(\ck, \Set)  \simeq  \text{Acc}_{\omega}(\mathsf{Ind}(\ck_{\omega}), \Set)  \simeq \Set^{\ck_{\omega}}. \] As a consequence of Diaconescu theorem \cite[B3.2.7]{elephant1} and the characterization of the $\mathsf{Ind}$-completion via flat functors, when restricted  to finitely accessible categories, the Scott adjunction yields a biequivalence of $2$-categories $\omega\text{-Acc} \simeq \text{Presh}$, with Presh the full $2$-subcategory of presheaf topoi. 
 \begin{center}
\begin{tikzcd}
\text{Acc}_{\omega} \arrow[rr, "\mathsf{S}" description, bend right] &  & \text{Topoi} \arrow[ll, "\mathsf{pt}" description, bend right] \\
 &  &  \\
\omega\text{-Acc} \arrow[uu, hook] \arrow[rr, bend right] &  & \text{Presh} \arrow[uu, hook] \arrow[ll, bend right]
\end{tikzcd}
\end{center}  
This observation is not new to literature, the proof of \cite[C4.3.6]{elephant2} gives this special case of the Scott adjunction. It is very surprising that the book does not investigate, or even mention the existence of the Scott adjunction, since it gets very close to defining it explicitly.
\end{rem}

\begin{thm}
The Scott adjunction restricts to a biequivalence of $2$-categories between the $2$-category of finitely accessible categories and the $2$-category of presheaf topoi.

$$\mathsf{S} : \omega\text{-Acc} \leftrightarrows \text{Presh}: \mathsf{pt}. $$
\end{thm}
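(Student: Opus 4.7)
The plan is to show that on the two subcategories $\omega\text{-Acc}$ and $\text{Presh}$ the adjunction restricts to an equivalence by verifying that (a) $\mathsf{S}$ and $\mathsf{pt}$ do restrict, and (b) unit and counit are equivalences on these restrictions. All the essential ingredients are already collected in \Cref{trivial} and the standard theory of flat functors.

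First, the restriction of $\mathsf{S}$ is settled by \Cref{trivial}: for a finitely accessible $\ck$ we have computed $\mathsf{S}(\ck) \simeq \Set^{\ck_\omega}$, a presheaf topos. Dually, for $\mathsf{pt}$ I would invoke Diaconescu's theorem \cite[B3.2.7]{elephant1}: for a small category $C$,
\[ \mathsf{pt}(\Set^C) = \text{Cocontlex}(\Set^C, \Set) \simeq \text{Flat}(C, \Set) \simeq \mathsf{Ind}(C), \]
and $\mathsf{Ind}(C)$ is by definition finitely accessible with directed colimits. So $\mathsf{pt}$ sends $\text{Presh}$ into $\omega\text{-Acc}$.

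Next I would check the unit $\eta_\ca : \ca \to \mathsf{pt}(\mathsf{S}(\ca))$ on a finitely accessible $\ca$. Writing $\ca \simeq \mathsf{Ind}(\ca_\omega)$ and chaining the equivalences just established,
\[ \mathsf{pt}(\mathsf{S}(\ca)) \simeq \mathsf{pt}(\Set^{\ca_\omega}) \simeq \mathsf{Ind}(\ca_\omega) \simeq \ca. \]
To identify this chain with $\eta_\ca$ itself, I would trace through the construction: the unit sends $a \in \ca$ to the evaluation-at-$a$ functor on $\mathsf{S}(\ca) = \text{Acc}_\omega(\ca, \Set)$; for a finitely presentable $a$ this evaluation corresponds under $\mathsf{S}(\ca) \simeq \Set^{\ca_\omega}$ to the representable $\ca_\omega(a,-)$, i.e.\ to $a$ itself under $\mathsf{Ind}(\ca_\omega)_\omega \simeq \ca_\omega$; and $\eta_\ca$ on general objects is the unique directed-colimit preserving extension, yielding the equivalence $\mathsf{Ind}(\ca_\omega) \simeq \ca$.

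Finally for the counit $\epsilon_\ce : \mathsf{S}(\mathsf{pt}(\ce)) \to \ce$ at a presheaf topos $\ce = \Set^C$: we have $\mathsf{pt}(\ce) \simeq \mathsf{Ind}(C)$, hence by \Cref{trivial}
\[ \mathsf{S}(\mathsf{pt}(\ce)) \simeq \mathsf{S}(\mathsf{Ind}(C)) \simeq \Set^{\mathsf{Ind}(C)_\omega} \simeq \Set^{\bar{C}} \simeq \Set^C = \ce, \]
where $\bar{C}$ is the Cauchy (idempotent) completion of $C$ and the last equivalence is the standard Morita equivalence of presheaf categories. I would again identify this chain with $\epsilon_\ce$ by recalling that $\epsilon_\ce$ at a point $p \in \mathsf{pt}(\ce)$ and an object $E \in \ce$ is $p^*(E)$, which under the above identifications is exactly the tautological evaluation pairing $\Set^C \times \mathsf{Ind}(C)^\op \to \Set$ whose currying is the Yoneda-type equivalence above.

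The main obstacle is purely bookkeeping: showing that each of the four equivalences is natural in $\ca$ (resp.\ $\ce$) and coincides with the adjunction's (co)unit rather than merely being an abstract equivalence of categories. The genuinely mathematical content — that $\mathsf{S}(\ca) = \Set^{\ca_\omega}$ on the finitely accessible side and that $\mathsf{pt}(\Set^C) = \mathsf{Ind}(C)$ by Diaconescu — is already available, so there is no new construction to perform.
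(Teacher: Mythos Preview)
Your proposal is correct and follows essentially the same route as the paper: both use \Cref{trivial} and Diaconescu to verify that $\mathsf{S}$ and $\mathsf{pt}$ restrict appropriately and that the unit and counit become equivalences on the restricted subcategories. You are somewhat more scrupulous than the paper in two places---noting that $\mathsf{Ind}(C)_\omega$ is the Cauchy completion $\bar{C}$ rather than $C$ itself (handled via Morita equivalence of presheaf categories), and in explicitly tracing that the displayed chains of equivalences really are the unit and counit---whereas the paper simply asserts the latter with reference to the description in \Cref{promenadeproofofscott}; but this is elaboration, not a different argument.
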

\begin{proof}
The previous remark has shown that  when $\ca$ is finitely accessible, $\mathsf{S}(\ca)$ is a presheaf topos and that, when $\ce$ is a presheaf topos, $\mathsf{pt}(\ce)$ is finitely accessible. To finish, we show that in this case the unit and the counit of the Scott adjunction are equivalence of categories. This is in fact essentially shown by the previous considerations. $$(\mathsf{pt}\mathsf{S}) (\mathsf{Ind}(C)) \simeq \mathsf{pt}(\Set^C) \stackrel{\text{Diac}}{\simeq} \mathsf{Ind}(C). $$

$$(\mathsf{S}\mathsf{pt}) (\Set^C) \stackrel{\text{Diac}}{\simeq} \mathsf{S}(\mathsf{Ind}(C)) \simeq \Set^C. $$
Notice that the equivalences above are precisely the unit and the counit of the Scott adjunction as described in \Cref{promenadeproofofscott} of this section.
\end{proof}

\begin{rem} \label{trivial1}
Thus, the Scott adjunction must induce an equivalence of categories between the Cauchy completions\footnote{The free completions that adds splittings of \textit{pseudo}-idempotents.} of $\omega$-Acc and Presh. The Cauchy completion of $\omega$-Acc is the full subcategory of $\text{Acc}_\omega$ of \textit{continuous categories} \cite{cont}. Continuous categories are the categorification of the notion of continuous poset and can be characterized as split subobjects of finitely accessible categories in  $\text{Acc}_\omega$. In \cite[C4.3.6]{elephant2} Johnstone observe that if a continuous category is cocomplete, then the corresponding Scott topos is injective (with respect to geometric embeddings) and vice-versa.
\end{rem}

\begin{exa}
As a direct consequence of \Cref{trivial}, we can calculate the Scott topos of $\Set$. $\mathsf{S}(\Set)$ is  $\Set^{\text{FinSet}}$. This topos is very often indicated as $\Set[\mathbb{O}]$, being the classifying topos of the theory of objects, i.e. the functor: $\text{Topoi}(-, \Set[\mathbb{O}]): \text{Topoi}^{\circ} \to \text{Cat}$ coincides with the forgetful functor. As a reminder for the reader, we state clearly the equivalence: \[\mathsf{S}(\Set) \simeq \Set[\mathbb{O}]. \]
\end{exa}

\begin{rem}[The Scott adjunction is not a biequivalence: Fields] \label{fields}
Whether the Scott adjunction is a biequivalence is a very natural question to ask. Up to this point we noticed that on the subcategory of topoi of presheaf type the counit of the adjunction is in fact an equivalence of categories. Since presheaf topoi are a quite relevant class of topoi one might think that the whole bi-adjunction amounts to a biequivalence. That's not the case: in this remark we provide a topos $\cf$ such that the counit \[\epsilon_\cf : \mathsf{Spt} \cf \to \cf\] is not an equivalence of categories. Let $\cf$ be the classifying topos of the theory of geometric fields \cite[D3.1.11(b)]{elephant2}. Its category of points is the category of fields $\mathsf{Fld}$, since this category is finitely accessible the Scott topos $\mathsf{Spt}(\cf)$ is of presheaf type by \Cref{trivial}, \[\mathsf{Spt}(\cf) = \mathsf{S} (\mathsf{Fld} ) \stackrel{\ref{trivial}}{\cong} \Set^{\mathsf{Fld}_\omega}.\]
It was shown in \cite[Cor 2.2]{10.2307/30041767} that $\cf$ cannot be of presheaf type, and thus $\epsilon_\cf$ cannot be an equivalence of categories.

\end{rem}

\begin{rem}[Classifying topoi for categories of diagrams] \label{diagrams}
Let us give a proof in our framework of a well known fact in topos theory, namely that a the category of diagrams over the category of points of a topos can be axiomatized by a geometric theory. This means that there exists a topos $\cf$  such that 
$$\mathsf{pt}(\ce)^C \simeq  \mathsf{pt}(\cf).$$
The proof follows from the following chain of equivalences. 
\begin{align*} 
\mathsf{pt}(\ce)^C  = &   \text{Cat}(C,\mathsf{pt}(\ce))  \\ 
\simeq & \text{Acc}_\omega(\mathsf{Ind}(C), \mathsf{pt}(\ce)) \\
\simeq & \text{Topoi}(\mathsf{S}\mathsf{Ind}(C), \ce) \\
\simeq & \text{Topoi}(\Set^C, \ce) \\
\simeq & \text{Topoi}(\Set \times \Set^C, \ce) \\
\simeq & \text{Topoi}(\Set, \ce^{\Set^C}) \\
\simeq & \mathsf{pt}(\ce^{\Set^C}).
\end{align*}

Notice that $\Set^C$ is indeed an exponentiable topos because a presheaf topos is locally finitely presentable, and thus is a continuous category, so that we can apply the main theorem of \cite{cont}.
\end{rem}

\subsection{The $\kappa$-Scott adjunction}\label{kappa} \label{promenadegeneralizations}

The most natural generalization of the Scott adjunction is the one in which directed colimits are replaced with $\kappa$-filtered colimits and finite limits ($\omega$-small) are replaced with $\kappa$-small limits. This unveils the deepest reason for which the Scott adjunction exists: namely $\kappa$-directed colimits commute with $\kappa$-small limits in the category of sets.

\begin{thm}[{\cite[Prop 3.4]{simon}}]
There is  an $2$-adjunction $$\mathsf{S}_{\kappa} : \text{Acc}_{\kappa} \leftrightarrows \kappa\text{-Topoi}: \mathsf{pt}_{\kappa}. $$
\end{thm}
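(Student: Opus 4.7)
The plan is to mirror the proof of \Cref{scottadj}, replacing ``$\omega$'' and ``finite limits'' uniformly by ``$\kappa$'' and ``$\kappa$-small limits''. Define, for $\ca \in \text{Acc}_\kappa$,
$$\mathsf{S}_\kappa(\ca) := \text{Acc}_\kappa(\ca, \Set).$$
The deepest input is the commutation of $\kappa$-filtered colimits with $\kappa$-small limits in $\Set$, just as the original adjunction relied on commutation of directed colimits with finite limits.

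First I would verify that $\mathsf{S}_\kappa(\ca)$ is a Grothendieck topos, adapting the proof of \Cref{scottconstructionwelldefined}. Pick $\lambda \geq \kappa$ with $\ca$ being $\lambda$-accessible. The inclusion $\text{Acc}_\kappa(\ca,\Set) \hookrightarrow \lambda\text{-Acc}(\ca,\Set) \simeq \Set^{\ca_\lambda}$ is fully faithful into a presheaf topos and preserves all colimits and $\kappa$-small limits (the former because colimits in functor categories are pointwise, the latter by the aforementioned commutation in $\Set$). The adjoint functor theorem yields a right adjoint, producing a coreflective subcategory of a topos with lex associated comonad, whence $\mathsf{S}_\kappa(\ca)$ is a topos by \cite[V.8 Thm.4]{sheavesingeometry}.

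Functoriality of $\mathsf{S}_\kappa : \text{Acc}_\kappa \to \kappa\text{-Topoi}$ then comes exactly as in \Cref{defnS}: a $1$-cell $f:\ca\to\cb$ in $\text{Acc}_\kappa$ induces the precomposition $f^*:\mathsf{S}_\kappa(\cb)\to\mathsf{S}_\kappa(\ca)$, which is well-defined (since $f$ preserves $\kappa$-filtered colimits), cocontinuous (so it has a right adjoint $f_*$ by the adjoint functor theorem), and $\kappa$-lex (again by the commutation property). Thus $(f^*\dashv f_*)$ is a $\kappa$-geometric morphism.

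The adjunction itself is obtained from the symmetric roles of the two variables in the bifunctor $\ca \times \ce \to \Set$, in the spirit of the duality-pattern of \Cref{duality} with $\Set$ playing the role of dualizing object:
\begin{align*}
\kappa\text{-Topoi}(\mathsf{S}_\kappa(\ca), \ce) &\simeq \text{Cocont}_{\kappa\text{-lex}}(\ce, \text{Acc}_\kappa(\ca, \Set)) \\
&\simeq \text{Acc}_\kappa(\ca, \text{Cocont}_{\kappa\text{-lex}}(\ce, \Set)) \\
&= \text{Acc}_\kappa(\ca, \mathsf{pt}_\kappa(\ce)).
\end{align*}
The main obstacle is the middle swap: one must check that a bifunctor $\ca \times \ce \to \Set$ factors through the first side if and only if it factors through the second, which reduces once more to the commutation of $\kappa$-small limits with $\kappa$-filtered colimits in $\Set$. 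A secondary task, mirroring \Cref{pt}, is to confirm that $\mathsf{pt}_\kappa$ really lands in $\text{Acc}_\kappa$, a straightforward $\kappa$-version of the classical \cite[Cor. 4.3.2]{borceux_19943}.
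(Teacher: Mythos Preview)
Your approach is essentially the paper's: the paper does not give a separate proof but simply observes (\Cref{generalize}) that every step of the $\omega$-case goes through verbatim after replacing directed colimits by $\kappa$-filtered colimits and finite limits by $\kappa$-small limits, which is exactly what you spell out. One small slip: you conclude that $\mathsf{S}_\kappa(\ca)$ is a \emph{topos} via \cite[V.8 Thm.4]{sheavesingeometry}, but what is needed is that it is a $\kappa$-\emph{topos}; your inclusion preserves $\kappa$-small limits, so the associated comonad is $\kappa$-lex (not merely lex), and one must invoke the $\kappa$-analogue of that theorem rather than the theorem itself.
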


\begin{rem}
$\text{Acc}_{\kappa}$ is the $2$-category of accessible categories with $\kappa$-directed colimits, a $1$-cell is a functor preserving $\kappa$-filtered colimits, $2$-cells are natural transformations. $\text{Topoi}_{\kappa}$ is the $2$-category of Grothendieck $\kappa$-topoi. A $1$-cell is a $\kappa$-geometric morphism and has the direction of the right adjoint. $2$-cells are natural transformation between left adjoints. A $\kappa$-topos is a $\kappa$-exact localization of a presheaf category. These creatures are not completely new to the literature but they appear sporadically and a systematic study is still missing. We should reference, though, the works of Espindola \cite{ESPINDOLA2019137}. We briefly recall the relevant definitions.
 \end{rem}

 \begin{defn}
 A $\kappa$-topos is a $\kappa$-exact localization of a presheaf category.
 \end{defn}

  \begin{defn}
 A $\kappa$-geometric morphism $f: \ce \to \cf$ between $\kappa$-topoi is an adjunction $f^*: \cf \leftrightarrows \ce: f_*$ whose left adjoint preserves $\kappa$-small limits.
 \end{defn}
 
 \begin{rem}\label{generalize}
It is quite evident that every remark until this point  finds its direct $\kappa$-generalization substituting every occurrence of \textit{directed colimits }with \textit{$\kappa$-directed colimits.}
 \end{rem}

\begin{rem}\label{kdiac}
Let $\ca$ be a category in $\text{Acc}_{\omega}$.  For a large enough $\kappa$ the Scott adjunction axiomatizes $\ca$ (in the sense of \Cref{Free geometric theory}), in fact if $\ca$ is $\kappa$-accessible $\mathsf{pt}_\kappa \mathsf{S}_\kappa \ca \cong \ca$, for the $\kappa$-version of Diaconescu Theorem, that in this text appears in \Cref{trivial}.
\end{rem}

\begin{rem} It might be natural to conjecture that Presh happens to be $\bigcap_\kappa \kappa \text{-Topoi}$.  Simon Henry, has provided a counterexample to this superficial conjecture, namely $\mathsf{Sh}([0,1])$. \cite[Rem. 4.4, 4.5 and 4.6]{kelly1989complete} gives a theoretical reason for which many other counterexamples do exist and then provides a collection of them in Sec. 5 of the same paper.
\end{rem}

\subsection{Proof of the Scott Adjunction} \label{promenadeproofofscott}

We end this section including a full proof of the Scott adjunction.

\begin{proof}[Proof of \Cref{scottadj}] 
We prove that there exists an equivalence of categories, $$\text{Topoi}(\mathsf{S}(\ca), \cf) \cong \text{Acc}_\omega(\ca, \mathsf{pt}(\cf)). $$ The proof makes this equivalence evidently natural. This proof strategy is similar to that appearing in \cite{simon}, even thought it might look different at first sight.

\begin{align*} 
\text{Topoi}(\mathsf{S}(\ca), \cf) \cong &  \text{Cocontlex}(\cf, \mathsf{S}(\ca))  \\ 
 \cong & \text{Cocontlex}(\cf, \text{Acc}_\omega(\ca, \Set))  \\
\cong & \text{Cat}_{\text{cocontlex,}\text{acc}_\omega}(\cf \times \ca, \Set) \\
\cong &  \text{Acc}_\omega(\ca, \text{Cocontlex}(\cf,\Set))  \\
 \cong &  \text{Acc}_\omega(\ca, \text{Topoi}(\Set,\cf)). \\
  \cong & \text{Acc}_\omega(\ca, \mathsf{pt}(\cf)).
\end{align*}

\end{proof}

\begin{proof}[A description of the (co)unit]
We spell out the unit and the counit of the adjunction. 
\begin{itemize}
  \item[$\eta$] For an accessible category with directed colimits $\ca$ we must provide a functor $\eta_\ca: \ca \to \mathsf{pt}\mathsf{S}(\ca)$. Define, $$\eta_\ca(a)(-) := (-)(a).$$  $\eta_\ca(a): \mathsf{S}(\ca) \to \Set$ defined in this way  is a functor preserving colimits and finite limits and thus defines a point of $\mathsf{S}(\ca)$.
  \item[$\epsilon$] The idea is very similar, for a topos $\ce$, we must provide a geometric morphism $\epsilon_\ce: \mathsf{S}\mathsf{pt}(\ce) \to \ce$. Being a geometric morphism, it's equivalent to provide a cocontinuous and finite limits preserving functor $\epsilon_\ce^*: \ce \to \mathsf{S}\mathsf{pt}(\ce)$. Define, $$\epsilon_\ce^*(e)(-) = (-)^*(e). $$
\end{itemize}
\end{proof}

\section{Enrichment of Topoi over $\text{Acc}_\omega$} \label{sec3}

This section is dedicated to a $2$-categorical perspective on the Scott adjunction and its main characters. We provide an overview of the categorical properties of $\text{Acc}_\omega$ and Topoi. Mainly, we show that the $2$-category of topoi is enriched over $\text{Acc}_\omega$ and has copowers. We show that this observation generalizes the Scott adjunction in a precise sense. We discuss the $2$-categorical properties of both the $2$-categories, but this work is not original. We will provide references within the discussion.

\subsection{$2$-categorical properties of $\text{Acc}_\omega$}
\subsubsection{(co)Limits in $\text{Acc}_\omega$} \label{colimitsinacc}

The literature contains a variety of results on the $2$-dimensional structure of the $2$-category $\text{Acc}$ of accessible categories and accessible functors. Among these, one should mention \cite{Makkaipare} for lax and pseudo-limits in $\text{Acc}$ and \cite{colimitacc} for colimits. Our main object of study, namely $\text{Acc}_\omega$, is a (non-full) subcategory of $\text{Acc}$, and thus it is a bit tricky to infer its properties from the existing literature. Most of the work was successfully accomplished in \cite{lieberman2015limits}. Let us list the main results of these references that are related to $\text{Acc}_\omega$.

\begin{prop}[{\cite[2.2]{lieberman2015limits}}]
$\text{Acc}_\omega$ is closed under pie-limits\footnote{These are those limits can be reduced to products, inserters and equifiers.} in $\text{Acc}$ (and thus in the illegitimate $2$-category of locally small categories).
\end{prop}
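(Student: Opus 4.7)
Since pie-limits are generated by products, inserters, and equifiers, it suffices to verify closure under each of these three class of limits separately, and then piece them together. In each case my strategy is: (i) recall the concrete description of the limit as a (full) subcategory of a product of the inputs; (ii) show that directed colimits are computed in this subcategory componentwise; (iii) check that the universal cone consists of directed-colimit-preserving functors; and (iv) verify the universal property with respect to cones in $\text{Acc}_\omega$.

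For a (small) product $\prod_i \ca_i$ with $\ca_i \in \text{Acc}_\omega$, directed colimits exist and are computed componentwise, and the projections clearly preserve them; the universal property is immediate because a tuple of directed-colimit-preserving functors gives a componentwise directed-colimit-preserving functor into the product. For the inserter of $F,G\colon \ca \to \cb$ in $\text{Acc}_\omega$, the underlying category is the full subcategory of $\ca \times \cb^{\mathbf 2}$ on pairs $(a,\alpha\colon Fa \to Ga)$; given a directed diagram $(a_i,\alpha_i)$, form $a = \colim a_i$ in $\ca$ and use that $F,G$ preserve the colimit to obtain a canonical arrow $\alpha\colon Fa = \colim Fa_i \to \colim Ga_i = Ga$, which provides the colimit in the inserter. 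The forgetful projection to $\ca$ visibly preserves these directed colimits, and any cone $H\colon \cd \to \ca$ together with a $2$-cell $FH \Rightarrow GH$ with $H$ in $\text{Acc}_\omega$ factors through a directed-colimit-preserving functor into the inserter, by the pointwise construction. For equifiers, the description is even simpler: the equifier of parallel $2$-cells $\alpha,\beta\colon F \Rightarrow G\colon\ca\to\cb$ is a full subcategory of $\ca$, and the conditions defining it are preserved by directed colimits precisely because $F, G$ preserve them and $\alpha_{(-)}$, $\beta_{(-)}$ are natural; hence the inclusion in $\ca$ creates directed colimits, making the equifier an object of $\text{Acc}_\omega$ with directed-colimit-preserving projection.

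To upgrade these three case analyses to a statement about arbitrary pie-limits, I would invoke the fact that $\text{Acc}$ is already known to be closed under pie-limits (by the cited Makkai--Paré work), so the limit in the ambient $2$-category of locally small categories lies in $\text{Acc}$; the work above then shows that directed colimits in this limit exist and are created by the leg projections. Checking the universal property in $\text{Acc}_\omega$ then reduces, by the standard decomposition of a pie-limit, to the three verifications already performed.

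The main technical subtlety, and the step I would be most careful about, is the interaction between the accessibility bound provided by Makkai--Paré and the preservation of directed colimits: one must be sure that in each case the directed colimits as computed in the ambient locally small category actually coincide with those constructed in the inputs componentwise, so that no recomputation of colimits is needed after restricting to the accessible subcategory. For products and equifiers this is transparent, while for inserters it is precisely the commutation of $F$ and $G$ with directed colimits that makes the pointwise recipe well-defined; everything else is a routine unwinding of the universal properties.
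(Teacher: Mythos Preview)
The paper does not actually prove this proposition: it is stated with a citation to \cite[2.2]{lieberman2015limits} and no proof is given. So there is no ``paper's own proof'' to compare against; the result is imported wholesale from the literature.

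Your proof plan is the standard one and is essentially correct. A couple of minor remarks. First, for inserters you should also note that morphisms in the inserter are just morphisms in $\ca$ compatible with the inserted $2$-cells, so the colimit cocone you build is automatically a cocone in the inserter and is colimiting there; you gesture at this but it is worth making explicit. Second, your last paragraph slightly overstates the subtlety: once you have shown that the limit object, computed in $\mathrm{Cat}$, has directed colimits created by the projections, accessibility (from Makkai--Par\'e) plus this creation statement already gives membership in $\text{Acc}_\omega$; there is no further interaction to worry about. Finally, be aware that the general reduction ``pie-limits are generated by products, inserters, and equifiers'' requires an iterated construction, so strictly speaking you need the three closure statements to be stable under composition (i.e.\ the legs of an already-formed pie-limit preserve directed colimits, which you do verify), but this is routine.
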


\begin{prop}[Slight refinement of {\cite[2.1]{colimitacc}}]
Every directed diagram of accessible categories and full embeddings preserving directed colimits has colimit in Cat, and is in fact the colimit in $\text{Acc}_\omega$.
\end{prop}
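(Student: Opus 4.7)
The plan is to construct the colimit explicitly as a kind of filtered union and then bootstrap the universal property. Write $D\colon I\to \text{Acc}_\omega$ for the given directed diagram, with transition functors $f_{ij}\colon \ca_i\to \ca_j$ that are full embeddings preserving directed colimits. Because the $f_{ij}$ are fully faithful, a directed colimit of a diagram of categories in $\Cat$ admits a very concrete description: objects of $\ca_\infty := \colim_I \ca_i$ are represented by pairs $(i,a)$ with $a\in\ca_i$, and similarly for morphisms, modulo identifications along the $f_{ij}$. The fact that this is the $\Cat$-colimit is the content of the version of [2.1]{colimitacc} one gets after forgetting accessibility, and the canonical coprojections $\iota_i\colon \ca_i\hookrightarrow \ca_\infty$ are again full embeddings.

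Next I would equip $\ca_\infty$ with directed colimits. Given a small directed diagram $d\colon J\to \ca_\infty$, each object $d(j)$ is represented in some $\ca_{i(j)}$ and each morphism lives in some $\ca_{i(j,k)}$. By directedness of $I$ together with smallness of $J$, all of these can be majorized by a single index $i\in I$, and fullness of the transition functors lets us lift $d$ to a diagram $\tilde d\colon J\to \ca_i$. Set $\colim d := \iota_i(\colim \tilde d)$, computed in $\ca_i$. The crucial observation, and the main subtlety, is that this construction is independent of the choice of $i$ and is functorial in $d$: this is exactly where the hypothesis that the $f_{ij}$ preserve directed colimits is used, because pushing $\colim \tilde d$ from $\ca_i$ to any larger $\ca_{i'}$ must agree with $\colim f_{ii'}\tilde d$. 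With this in place, each $\iota_i$ preserves directed colimits by construction. Accessibility of $\ca_\infty$ can be taken from [2.1]{colimitacc}, which already produces the colimit in $\text{Acc}$ under weaker hypotheses; thus $\ca_\infty\in \text{Acc}_\omega$.

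For the universal property in $\text{Acc}_\omega$, let $(g_i\colon \ca_i\to \cb)_{i\in I}$ be a cocone with each $g_i$ preserving directed colimits. The $\Cat$ universal property of $\ca_\infty$ already produces a unique functor $g\colon \ca_\infty\to \cb$ with $g\iota_i=g_i$, so uniqueness is free. To see that $g$ preserves directed colimits, take any directed diagram $d\colon J\to \ca_\infty$, lift it as above to $\tilde d\colon J\to \ca_i$, and compute $g(\colim d)=g\iota_i(\colim \tilde d)=g_i(\colim \tilde d)=\colim g_i\tilde d = \colim g d$, where the middle equality uses the hypothesis on $g_i$. Any other cocone morphism $g'$ in $\text{Acc}_\omega$ agrees with $g$ on every $\ca_i$ and therefore, since every object and morphism of $\ca_\infty$ is in the image of some $\iota_i$, coincides with $g$.

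The expected obstacle is entirely the well-definedness step in paragraph two; the rest is bookkeeping on top of the $\Cat$-colimit description and the already known $\text{Acc}$-level result of [colimitacc]. Once the compatibility of directed colimits across the tower is established, passage from "colimit in $\Cat$/$\text{Acc}$" to "colimit in $\text{Acc}_\omega$" is essentially automatic.
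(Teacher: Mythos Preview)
The paper gives no proof of this proposition; it is simply recorded as a ``slight refinement'' of \cite[2.1]{colimitacc} and left at that, so there is no argument of the paper's to compare against.

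Your proof has a real gap at the lifting step in the second paragraph. You write that ``by directedness of $I$ together with smallness of $J$, all of these can be majorized by a single index $i\in I$,'' but directedness only furnishes upper bounds for \emph{finite} subsets of $I$; when $J$ is infinite the family $\{i(j),i(j,k)\}$ need not have an upper bound. This is not the well-definedness issue you anticipated (independence of the chosen lift) but the prior question of whether any lift exists at all. Worse, the gap looks unfillable without extra hypotheses: take $I=\omega$ and let $\ca_n$ be the finite chain $\{0<1<\cdots<n\}$. Each $\ca_n$ is finitely accessible with directed colimits, and the inclusions are full embeddings preserving them, yet the $\Cat$-colimit is the poset $\omega$, in which the identity diagram $\omega\to\omega$ has no colimit. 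Thus $\omega\notin\text{Acc}_\omega$ and the $\Cat$-colimit is not the colimit in $\text{Acc}_\omega$ (which, if anything, is $\omega+1$). Either the statement is tacitly importing further hypotheses from \cite[2.1]{colimitacc} that exclude such examples, or the ``slight refinement'' as literally written is too strong; in either case your argument cannot be completed as it stands, and the repair must come from the precise hypotheses in \cite{colimitacc} rather than from directedness of $I$ alone.
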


\subsubsection{$\text{Acc}_\omega$ is monoidal closed}

This subsection discusses a monoidal closed structure on $\text{Acc}_\omega$. The reader should keep in mind the monoidal product of modules over a ring, because the construction is similar in spirit, at the end of the subsection we will provide an hopefully convincing argument in order to show that the construction is similar for a quite quantitative reason. The main result of the section should be seen as a slight variation of \cite[6.5]{kelly1982basic} where the enrichment base is obviously the category of Sets and $\mathcal{F}$-cocontinuity is replaces by preservation of directed colimits. Our result doesn't technically follows from Kelly's one because of size issues, but the general idea of the proof is in that spirit. Moreover, we found it clearer to provide an explicit construction of the tensor product in our specific case. The reader is encouraged to check \cite{350869}, where Brandenburg provides a concise presentations of Kelly's construction. For a treatment of how the bilinear tensor product on categories with certain colimits gives a monoidal bicategory we refer to \cite{bourke2017skew,HYLAND2002141,LOPEZFRANCO20112557}. 

\begin{rem}[A natural internal hom] \label{internalhom} Given two accessible categories $\ca, \cb $ in $\text{Acc}_\omega$, the category of functors preserving directed colimits $\text{Acc}_\omega(\ca, \cb)$ has directed colimits and they are computed pointwise. Moreover it is easy to show that it is sketchable and thus accessible. Indeed $\text{Acc}_\omega(\ca, \cb)$ is accessibly embedded in $\cb^{\ca_\lambda}$ and coincides with the category of those functors $\ca_\lambda \to \cb$ preserving $\lambda$-small directed colimits, which makes it clearly sketchable. Thus we obtain a $2$-functor, \[[-,-]:\text{Acc}_\omega^\circ \times \text{Acc}_\omega \to \text{Acc}_\omega.\] In our analogy, this corresponds to the fact that the set of morphisms between two modules over a ring $\mathsf{Mod}(M,N)$ has a (pointwise) structure of module.
\end{rem}

\begin{rem}[Looking for a tensor product: the universal property] \label{univproptensor} 
Assume for a moment that the tensorial structure that we are looking for exists, then we would obtain a family of (pseudo)natural equivalences of categories, $$\text{Acc}_\omega(\ca \otimes \cb, \cc) \simeq \text{Acc}_\omega (\ca, [\cb, \cc]) \simeq \omega\text{-Bicocont}(\ca \times \cb, \cc).$$ In the display we wrote $\omega\text{-Bicocont}(\ca \times \cb, \cc)$ to mean the category of those functors that preserve directed colimits in each variable. The equation gives us the universal property that should define $\ca \otimes \cb$ up to equivalence of categories and is consistent with our ongoing analogy of modules over a ring, indeed the tensor product classifies \textit{bilinear maps}. 
\end{rem}

\begin{con}[Looking for a tensor product: the construction] \label{constructiontensor} 
Let $\yo: \ca \times \cb \to \cp(\ca \times \cb)$ be the Yoneda embedding of $\ca \times \cb$ corestricted to the full subcategory of small presheaves \cite{presheaves}. Let $B(\ca,\cb)$ be the full subcategory of $\cp(\ca \times \cb)$ of those functors that preserve cofiltered limits in both variables\footnote{i.e., send filtered colimits in $\ca$ or $\cb$ to limits in $\Set$.}. It is easy to show that $B(\ca,\cb)$ is sketched by a limit theory, and thus is locally presentable. The inclusion $i: B(\ca,\cb) \hookrightarrow \cp(\ca \times \cb)$ defines a small-orthogonality class\footnote{Here we are using that $\ca$ and $\cb$ are accessible in order to cut down the size of the orthogonality.} in $\cp(\ca \times \cb)$ and is thus reflective \cite[1.37, 1.38]{adamekrosicky94}. Let $L$ be the left adjoint of the inclusion, as a result we obtain an adjunction, $$L: \cp(\ca \times \cb) \leftrightarrows B(\ca,\cb): i. $$
Now define $\ca \otimes \cb$ to be the smallest full subcategory of $B(\ca,\cb)$ closed under directed colimits and containing the image of $L \circ \yo$. Thm. \cite[6.23]{kelly1982basic} in Kelly ensures that $\ca \otimes \cb$ has the universal property described in \Cref{univproptensor} and thus is our tensor product. It might be a bit hard to see but this construction still follows our analogy, the tensor product of two modules is indeed built from free module on the product and  the \textit{bilinear} relations.
\end{con}

\begin{thm}\label{accmonoidalclosed}
$\text{Acc}_\omega$, together with the tensor product $\otimes$ defined in \Cref{constructiontensor} and the internal hom defined in \Cref{internalhom} is a monoidal biclosed bicategory\footnote{This is not strictly true, because the definition of monoidal closed category does not allow for equivalence of categories. We did not find a precise terminology in the literature and we felt non-useful to introduce a new concept for such a small discrepancy.} in the sense that there is are pseudo-equivalences of categories $$\text{Acc}_\omega(\ca \otimes \cb, \cc) \simeq \text{Acc}_\omega (\ca, [\cb, \cc]),$$ which are natural in $\cc$.
\end{thm}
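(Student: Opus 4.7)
My plan is to establish the pseudo-equivalence by factoring it through the intermediate category $\omega\text{-Bicocont}(\ca \times \cb, \cc)$ of functors $\ca \times \cb \to \cc$ preserving directed colimits in each variable separately, giving the chain
$$\text{Acc}_\omega(\ca \otimes \cb, \cc) \simeq \omega\text{-Bicocont}(\ca \times \cb, \cc) \simeq \text{Acc}_\omega(\ca, [\cb, \cc]),$$
with naturality in $\cc$ transported across each step. This mirrors the classical argument for tensor products of modules, where one first classifies bilinear maps and then curries.

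For the first equivalence, I would first check that the composite $L\circ \yo : \ca \times \cb \to \ca \otimes \cb$ is itself bilinear, which follows because $\yo$ preserves all limits, $L$ preserves colimits, and the definition of $B(\ca, \cb)$ forces directed colimits in either variable of $\ca \times \cb$ to be sent to limits in $\Set$, i.e.~to colimits in $B(\ca, \cb)^{\opp}$-style — in particular, to directed colimits in $\ca \otimes \cb$. Restriction along $L \circ \yo$ then gives a functor $\text{Acc}_\omega(\ca \otimes \cb, \cc) \to \omega\text{-Bicocont}(\ca \times \cb, \cc)$, and the inverse is produced by the universal property recorded in \cite[6.23]{kelly1982basic}: a bilinear $F : \ca \times \cb \to \cc$ is first Kan-extended along $\yo$ to a small-presheaf-cocontinuous functor into $\cc$, then factors through $L$ precisely because $F$ respects directed colimits in each variable, and finally restricts to the directed-colimit closure $\ca \otimes \cb$. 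The unit and counit of this equivalence are witnessed by the density of $L \circ \yo$ in $\ca \otimes \cb$ under directed colimits.

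The second equivalence is the standard currying $F \mapsto (a \mapsto F(a,-))$. The functor $F(a, -)$ preserves directed colimits by second-variable bilinearity of $F$, and the resulting assignment $\ca \to [\cb, \cc]$ preserves directed colimits because, by \Cref{internalhom}, directed colimits in $[\cb, \cc]$ are computed pointwise, so first-variable bilinearity of $F$ is exactly what is needed. The inverse is uncurrying, and the two composites are naturally equivalent to the identity by an evaluation argument.

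The main obstacle I expect is careful size control, which is what prevents a direct citation of \cite[6.5]{kelly1982basic}. Concretely I must verify: (i) that $B(\ca, \cb)$ is locally presentable via the sketch indicated in \Cref{constructiontensor}; (ii) that the orthogonality class cutting out $B(\ca, \cb) \hookrightarrow \cp(\ca \times \cb)$ is small, which is exactly where the accessibility of $\ca$ and $\cb$ is used in order to invoke \cite[1.37, 1.38]{adamekrosicky94} and obtain the reflector $L$; and (iii) that the cocontinuous extension of a bilinear $F$ factors through the subcategory $\ca \otimes \cb$ and lands in $\cc$, despite $\cc$ possessing only directed colimits. The last point is handled by the choice of $\ca \otimes \cb$ as the \emph{smallest} full subcategory of $B(\ca, \cb)$ closed under directed colimits and containing the image of $L \circ \yo$: only the directed colimits that are explicitly generated from this image need to be absorbed, which is precisely what $\text{Acc}_\omega(-, \cc)$ supports. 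Naturality in $\cc$ is immediate by inspection of the three functorial constructions (restriction, currying, and their inverses), all of which are manifestly natural.
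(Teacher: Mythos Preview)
Your proposal is correct and follows essentially the same route as the paper. The paper's own proof is the single sentence ``Follows directly from the discussion above,'' where that discussion consists of \Cref{internalhom}, \Cref{univproptensor}, and \Cref{constructiontensor} (the last of which already invokes \cite[6.23]{kelly1982basic} for the universal property of $\ca \otimes \cb$); your write-up is a faithful unpacking of exactly that discussion, including the factorisation through $\omega\text{-Bicocont}(\ca \times \cb, \cc)$, the currying equivalence, and the size caveats.
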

\begin{proof}
Follows directly from the discussion above.
\end{proof}

\begin{rem}[Up to iso/up to equivalence]
As in \cite[6.5]{kelly1982basic}, we will not distinguish between the properties of this monoidal structure (where everything is true up to equivalence of categories) and a usual one, where everything is true up to isomorphism. In our study this distinction never plays a rôle, thus we will use the usual terminology about monoidal structures. 
\end{rem}

\begin{rem}[The unit] \label{unitmonoidalstructure}
The unit of the above-mentioned monoidal structure is the terminal category in Cat, which is also terminal in $\text{Acc}_\omega$.
\end{rem}

\begin{rem}[Looking for a tensor product: an abstract overview] \label{tensorabstract} 
In this subsection we have used the case of modules over a ring as a kind of analogy/motivating example. In this remark we shall convince the reader that the analogy can be pushed much further. Let's start by the observation that $R\text{-}\mathsf{Mod}$ is the category of algebras for the monad $R[-]: \Set \to \Set$. The monoidal closed structure of $\mathsf{Mod}$ can be recovered from the one of the category of sets $(\Set, 1, \times, [-,-])$ via a classical theorem proved by Kock in the seventies.  It would not make the tractation more readable to cite all the papers that are involved in this story, thus we mention the PhD thesis of Brandenburg \cite[Chap. 6]{brandenburg2014tensor} which provides a very coherent and elegant presentation of the literature. 

\begin{thm}(Seal, \cite[6.5.1]{brandenburg2014tensor}) Let $T$ be a coherent (symmetric) monoidal monad on a (symmetric) monoidal category C with reflexive coequalizers. Then $\mathsf{Mod}(T)$ becomes a (symmetric) monoidal category.
\end{thm}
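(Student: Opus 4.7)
The plan is to realize the tensor product of two $T$-algebras as a reflexive coequalizer lifted from $C$, following the classical Linton--Seal template, and then to transport the coherence of $(C,\otimes,I)$ through these coequalizers. The \emph{coherent} hypothesis on the monoidal monad will be exactly what is needed to make each transport succeed.

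First, I would unpack the data. A (symmetric) monoidal monad $T$ comes equipped with natural maps $\phi_{A,B}\colon TA\otimes TB \to T(A\otimes B)$ and $\phi_0\colon I\to TI$, compatible with $\mu,\eta$ and with the associator/unitor (and the symmetry, if present) of $C$. Given $T$-algebras $(A,a)$ and $(B,b)$, define $A\otimes_T B$ as the coequalizer in $C$ of the parallel pair
\[
T(TA\otimes TB)\ \overset{T(a\otimes b)}{\underset{\mu_{A\otimes B}\circ T\phi_{A,B}}{\rightrightarrows}}\ T(A\otimes B).
\]
This pair is reflexive: the morphism $T(\eta_A\otimes\eta_B)$ is a common section (it splits the top map by the algebra unit axiom of $a,b$, and splits the bottom one by the compatibility $\phi\circ(\eta\otimes\eta)=\eta_{A\otimes B}$ together with the monad unit law). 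Because $C$ has reflexive coequalizers, $A\otimes_T B$ exists, and using that both legs of the pair are morphisms of free algebras (for $\mu_{TA\otimes TB}$ on the source and $\mu_{A\otimes B}$ on the target, where compatibility comes from the $\phi$-$\mu$ axiom of a monoidal monad) the coequalizer lifts to $\mathsf{Mod}(T)$; the algebra structure on $A\otimes_T B$ is induced by $\mu_{A\otimes B}$.

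Next, I would produce the monoidal data. The unit object is $(TI,\mu_I)$, with the unitors $(TI)\otimes_T A \cong A \cong A\otimes_T (TI)$ induced from the unitors of $C$ via the universal property of the defining coequalizers, using $\phi_0$ to align things on the unit side. The associator $(A\otimes_T B)\otimes_T C \to A\otimes_T (B\otimes_T C)$ descends from the associator of $(C,\otimes)$: one checks that the two composite maps out of $T(TA\otimes TB)\otimes TC$ (and the symmetric version) which one wants to identify are coequalized, which is a diagram chase combining the associativity of $\otimes$, the compatibility of $\phi$ with the associator, and the associativity of $\mu$. The symmetric case is analogous, with the symmetry of $\otimes_T$ induced by that of $\otimes$ via the symmetry axiom of a symmetric monoidal monad.

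The main obstacle is the bookkeeping of the last step: one must verify that the pentagon, the triangle, and (in the symmetric case) the hexagon axioms for $\otimes_T$ all follow from those of $\otimes$. Each such axiom is obtained by applying the universal property of the defining reflexive coequalizers to the corresponding axiom for $\otimes$, and commutativity at the level of $C$ reduces to an interleaving of the coherence of $(C,\otimes,I)$ with the compatibility diagrams for $(\phi,\phi_0,\mu,\eta)$. The hypothesis that $T$ is \emph{coherent} is precisely the package of compatibilities that makes every one of these diagram chases go through; without it one would need to impose and verify those coherence cells by hand.
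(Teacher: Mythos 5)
Note first that the paper does not prove this statement at all: it quotes Seal's theorem from Brandenburg's thesis as background motivation inside \Cref{tensorabstract}, so your attempt can only be compared with the standard proof in the literature (Seal; Brandenburg, Chap.~6). Your skeleton follows that proof correctly up to a point: the defining reflexive pair $T(TA\otimes TB)\rightrightarrows T(A\otimes B)$ with common section $T(\eta_A\otimes\eta_B)$, the unit object $(TI,\mu_I)$, and the descent of associator, unitors and symmetry along the coequalizer projections are exactly the Linton--Seal template, and your verifications of reflexivity are correct.

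The genuine gap is your reading of the hypothesis \emph{coherent}. The compatibilities of $(\phi_{A,B},\phi_0)$ with $\mu$, $\eta$, the associator, the unitors and the symmetry are already part of the \emph{definition} of a (symmetric) monoidal monad; they are not what ``coherent'' adds, and with them alone the theorem fails in the stated generality. In Seal's and Brandenburg's terminology, coherence is a colimit-preservation condition: $T$ (together with the relevant composites built from $T$ and $\otimes$, such as $T((-)\otimes(-))$ and $T(-)\otimes T(-)$) is required to preserve reflexive coequalizers. This hypothesis is silently used at precisely the two steps you assert without proof. First, ``the coequalizer lifts to $\mathsf{Mod}(T)$'': your parallel pair does live in $\mathsf{Mod}(T)$ between free algebras, but the coequalizer $q\colon T(A\otimes B)\to A\otimes_T B$ computed in $C$ carries no canonical $T$-action; to induce a structure map $T(A\otimes_T B)\to A\otimes_T B$ from $\mu_{A\otimes B}$ you need $Tq$ to again be a coequalizer, i.e.\ you need $T$ to preserve this reflexive coequalizer --- the forgetful functor creates exactly those coequalizers that $T$ preserves. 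Second, the associator: to descend $\alpha$ one must exhibit $(A\otimes_T B)\otimes_T C$ and $A\otimes_T(B\otimes_T C)$ as quotients of a common object (a presentation by $T(A\otimes B\otimes C)$), which requires the defining coequalizer of $A\otimes_T B$ to survive application of functors such as $T((-)\otimes TC)$ and $(-)\otimes_T C$; no ``diagram chase combining the compatibility diagrams for $(\phi,\phi_0,\mu,\eta)$'' produces these comparison maps without that preservation. So the architecture of your proof is the right one, but the load-bearing hypothesis is attributed to the wrong mechanism, and the two central steps that depend on it are unjustified as written.
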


Now similarly to $\mathsf{Mod}(R)$ the $2$-category of categories with directed colimits and functors preserving them is the category of (pseudo)algebras for the KZ monad of the Ind-completion over locally small categories $$\mathsf{Ind}: \text{Cat} \to \text{Cat}. $$ \cite[6.7]{bourke2017skew} provides a version of Seal's theorem for monads over Cat. While it's quite easy to show that the completion under directed colimits meets many of Bourke's hypotheses, we do not believe that it meets all of them, thus we did not manage to apply a Kock-like result to derive \Cref{accmonoidalclosed}. Yet, we think we have provided enough evidence that the analogy is not just motivational.
\end{rem}

\subsection{$2$-categorical properties of $\text{Topoi}$} \label{propoftopoi}
\subsubsection{(co)Limits in $\text{Topoi}$}

The $2$-categorical properties of the category of topoi have been studied in detail in the literature. We mention \cite[B3.4]{elephant2} and \cite{Lurie} as a main reference.

\subsubsection{Enrichment over $\text{Acc}_\omega$, tensor and powers} \label{topoienriched}

The main content of this sub-subsection will be to show that the category of topoi and geometric morphisms (in the direction of the right adjoint) is enriched over $\text{Acc}_\omega$. Notice that formally speaking, we are enriching over a monoidal bicategory, thus the usual theory of enrichment is not sufficient. As Garner pointed out to us, the theory of bicategories enriched in a monoidal bicategory is originally due to Bozapalides in the 1970s, though he was working without the appropriate technical notion; more precise definitions are in the PhD theses of Camordy and Lack; and everything is worked out in very deep detail in \cite{garner2016enriched}.

\begin{rem} Recall that to provide such an enrichment means to

\begin{enumerate}
    \item show that given two topoi $\ce, \cf$, the set of geometric morphisms $\text{Topoi}(\ce, \cf)$ admits a structure of accessible category with directed colimits.
    \item provide, for each triple of topoi $\ce, \cf, \cg$, a functor preserving directed colimits $$\circ: \text{Topoi}(\ce, \cf) \otimes \text{Topoi}(\cf, \cg) \to  \text{Topoi}(\ce, \cg), $$ making the relevant diagrams commute.
\end{enumerate} 
(1) will be shown in \Cref{topoihomacc}, while (2) will be shown in \Cref{topoicomposition}.
\end{rem}

\begin{prop}\label{topoihomacc}
 Let $\ce,\cf$ be two topoi. Then the category of geometric morphisms $\text{Cocontlex}(\ce, \cf)$, whose objects are cocontinuous left exact functors and morphisms are natural transformations is an accessible category with directed colimits.
\end{prop}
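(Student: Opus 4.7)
The plan is to exhibit $\text{Cocontlex}(\ce,\cf)$ as an accessibly embedded, directed-colimit-closed, full subcategory of an accessible functor category; accessibility will then follow by standard stability results for accessible categories.

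First I would fix a small site $(C,J)$ with $\ce \simeq \Sh(C,J)$. By the classical universal property of sheaf topoi (see, e.g., \cite[B3.2]{elephant2}), restriction along the composite of Yoneda with sheafification $C \to \Set^{C^{\opp}} \to \ce$ induces a pseudo-equivalence
\[\text{Cocontlex}(\ce,\cf) \;\simeq\; \text{Flat}_J(C,\cf),\]
where the right-hand side denotes the category of $J$-continuous flat functors $C \to \cf$ with natural transformations. This already reduces the problem to analysing a full subcategory of the ordinary functor category $\cf^C$.

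Next I would verify that $\cf^C$ is itself accessible --- since $\cf$ is locally presentable and $C$ is small, this is standard --- and that $\text{Flat}_J(C,\cf)$ is closed under directed colimits computed pointwise in $\cf^C$. The decisive ingredient is that in any Grothendieck topos directed colimits commute with finite limits: this shows at once that a pointwise directed colimit of flat functors is flat (since the corresponding pointwise-computed left Kan extensions into $\Set^{C^{\opp}}$ remain left exact in the colimit), and that pointwise directed colimits preserve $J$-continuity, which is a condition phrased in terms of finite limits and coverings.

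Finally I would conclude by noting that both flatness and $J$-continuity are defined by preservation of a small family of diagrams, so $\text{Flat}_J(C,\cf)$ is an accessibly embedded full subcategory of the accessible category $\cf^C$; together with closure under directed colimits, this gives the accessibility of $\text{Cocontlex}(\ce,\cf)$ by an invocation of the standard results on accessible categories (e.g.\ \cite[Ch.~2]{adamekrosicky94}). The main obstacle I anticipate is the precise bookkeeping around flatness when the codomain is an arbitrary topos rather than $\Set$: one needs the topos-theoretic formulation --- that $F\colon C \to \cf$ is flat iff its left Kan extension along Yoneda into $\Set^{C^{\opp}}$ is left exact --- and a careful verification that this condition is genuinely of small rank in the sense required by accessibility theory.
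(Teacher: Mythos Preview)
Your proposal is correct and is essentially the argument the paper has in mind: the paper's proof is a one-line citation to \cite[Cor.~4.3.2]{borceux_19943} together with the observation that $\Set$ plays no special rôle there because the only property used is that finite limits commute with directed colimits in the codomain topos. Your write-up is precisely the unfolding of that citation---choose a site, invoke Diaconescu to identify $\text{Cocontlex}(\ce,\cf)$ with $J$-continuous flat functors $C\to\cf$, and check this is an accessibly embedded, directed-colimit-closed subcategory of $\cf^{C}$---so the approaches coincide.
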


\begin{proof}
The proof goes as in \cite[Cor.4.3.2]{borceux_19943}, $\Set$ plays no rôle in the proof. What matters is that finite limits commute with directed colimits in a topos.
\end{proof}

\begin{prop}\label{topoicomposition}
For each triple of topoi $\ce, \cf, \cg$, there exists a functor preserving directed colimits $$\circ: \text{Topoi}(\ce, \cf) \otimes \text{Topoi}(\cf, \cg) \to  \text{Topoi}(\ce, \cg), $$ making the relevant diagrams commute.
\end{prop}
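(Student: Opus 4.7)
The plan is to invoke the universal property of the tensor product as stated in \Cref{univproptensor}: a functor out of $\ca \otimes \cb$ preserving directed colimits corresponds, up to equivalence, to a functor $\ca \times \cb \to \cc$ that preserves directed colimits separately in each variable. Hence the task reduces to showing that ordinary composition
$$\circ : \text{Topoi}(\ce, \cf) \times \text{Topoi}(\cf, \cg) \to \text{Topoi}(\ce, \cg)$$
of geometric morphisms is bilinear with respect to directed colimits, and then transporting this bilinear map through the universal property of $\otimes$.

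Next I would recall from the proof of \Cref{topoihomacc} that directed colimits in $\text{Cocontlex}(\ce, \cf)$ are computed pointwise on inverse images, precisely because finite limits commute with directed colimits in any topos, so the pointwise colimit of cocontinuous left exact functors remains cocontinuous and left exact. Working at the level of inverse-image functors, the composition of two geometric morphisms $\ce \to \cf \to \cg$ becomes the composition in reverse order $\cg \to \cf \to \ce$ of their inverse images, both of which are themselves left adjoints.

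The two separate continuities are now immediate. Fixing $g^* : \cg \to \cf$, pre-composition with $g^*$ preserves pointwise colimits of any shape, so $(-) \circ g^*$ preserves directed colimits of inverse images. Symmetrically, fixing $f^* : \cf \to \ce$, post-composition with $f^*$ preserves pointwise colimits because $f^*$ is cocontinuous as a left adjoint. In either case the composite remains cocontinuous and left exact since both conditions are closed under composition. Applying \Cref{univproptensor} produces the required factorization $\circ : \text{Topoi}(\ce,\cf) \otimes \text{Topoi}(\cf,\cg) \to \text{Topoi}(\ce,\cg)$.

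The coherence diagrams (associativity and the unit laws of the enrichment) commute because ordinary composition of geometric morphisms is already strictly associative and unital, and the universal property of $\otimes$ transports these identities into diagrams of maps out of the tensor products. I do not anticipate a genuine obstacle: the only delicate point is bookkeeping around the fact that $\otimes$ is a pseudo-structure rather than a strict monoidal product, but the convention adopted after \Cref{accmonoidalclosed} allows us to ignore this distinction.
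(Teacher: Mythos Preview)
Your proposal is correct and follows essentially the same approach as the paper: invoke \Cref{univproptensor} to reduce to showing that ordinary composition of geometric morphisms preserves directed colimits in each variable, and observe that this holds because directed colimits in $\text{Topoi}(\ce,\cf)$ are computed pointwise on inverse images. Your version spells out the two separate continuities more carefully than the paper does, but the underlying argument is identical.
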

\begin{proof}
We will only provide the composition. The relevant diagrams commute trivially from the presentation of the composition. Recall that by \Cref{univproptensor} a map of the form $\circ: \text{Topoi}(\ce, \cf) \otimes \text{Topoi}(\cf, \cg) \to  \text{Topoi}(\ce, \cg), $ preserving directed colimits is the same of a functor $$ \circ: \text{Topoi}(\ce, \cf) \times \text{Topoi}(\cf, \cg) \to  \text{Topoi}(\ce, \cg) $$ preserving directed colimits in each variables. Obviously, since left adjoints can be composed in Cat, we already have such a composition. It's enough to show that it preserves directed colimits in each variable. Indeed this is the case, because directed colimits in these categories are computed pointwise.
\end{proof}

\begin{thm} \label{enrichment}
The category of topoi is enriched over $\text{Acc}_\omega$.
\end{thm}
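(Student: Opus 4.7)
The plan is to assemble the enrichment directly from the two preceding propositions, so that essentially all the serious technical work (hom-objects landing in $\text{Acc}_\omega$, composition preserving directed colimits variable-by-variable) has already been done. Concretely, for topoi $\ce,\cf$ I would declare the enriched hom to be the category $\text{Cocontlex}(\cf,\ce)$ of inverse image functors with natural transformations; by \Cref{topoihomacc} this lives in $\text{Acc}_\omega$, which supplies the hom-objects. The composition morphism in $\text{Acc}_\omega$ would then be precisely the functor produced in \Cref{topoicomposition}, which, via the universal property of $\otimes$ recalled in \Cref{univproptensor}, is the same datum as ordinary composition of left exact cocontinuous functors viewed as a $2$-variable functor preserving directed colimits in each argument.

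Next I would supply the identity data. Since by \Cref{unitmonoidalstructure} the monoidal unit of $\text{Acc}_\omega$ is the terminal category $\mathbb{1}$, a unit morphism $j_\ce \colon \mathbb{1} \to \text{Topoi}(\ce,\ce)$ is the same thing as a choice of object in $\text{Topoi}(\ce,\ce)$, and for this I take the identity geometric morphism $\id_\ce$. This is the only reasonable choice and requires no verification beyond observing that $\id_\ce$ is a geometric morphism.

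The remaining, and most delicate, task is the verification of associativity and the left/right unit coherences. Because $\text{Acc}_\omega$ is a monoidal bicategory rather than a strict monoidal $1$-category (equalities in \Cref{accmonoidalclosed} only hold up to pseudonatural equivalence), the appropriate notion is that of a bicategory enriched in a monoidal bicategory, for which the coherence data are the invertible $2$-cells filling the associator and unitor squares, subject to Stasheff-type pentagons and triangles; I would work inside the framework of \cite{garner2016enriched}. The key observation that makes the check routine is that, under the universal property of $\otimes$, every diagram one needs to commute can be transported along the equivalences $\text{Acc}_\omega(\mathcal{X}_1\otimes\cdots\otimes\mathcal{X}_n,\mathcal{Y}) \simeq \omega\text{-Multicocont}(\mathcal{X}_1\times\cdots\times\mathcal{X}_n,\mathcal{Y})$ to a diagram of multivariable functors built only from ordinary composition of left adjoints in $\text{Cat}$. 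Since composition in $\text{Cat}$ is strictly associative and unital, all the required coherence $2$-cells can be taken to be the canonical ones, and the pentagons/triangles collapse to identities after transport.

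The main obstacle is therefore not any concrete computation but the bookkeeping of transporting strict coherences across the equivalences defining $\otimes$; this is where one genuinely needs the machinery of \cite{garner2016enriched} rather than a plain enriched-category-theory citation. Once this translation is set up, associativity and unit axioms follow, and the enriched structure is complete.
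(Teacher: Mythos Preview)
Your proposal is correct and follows the same approach as the paper: the enrichment is assembled from \Cref{topoihomacc} (hom-objects) and \Cref{topoicomposition} (composition), with the coherences reducing to the strict associativity and unitality of composition in $\text{Cat}$. The paper's own proof is literally the single line ``Trivial from the previous discussion,'' so your write-up is in fact more explicit than the original---in particular, the paper does not spell out the unit datum $j_\ce$ or the transport of coherence along the $\otimes$/multicocontinuous equivalence, both of which you handle correctly.
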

\begin{proof}
Trivial from the previous discussion.
\end{proof}

\subsubsection{Tensors} \label{tensored}
In this subsection we show that the $2$-category of topoi has tensors (copowers) with respect to the enrichment of the previous section.

\begin{rem}
Let us recall what means to have tensors for a category $\mathsf{K}$ enriched over sets (that is, just a locally small category). To have tensors in this case means that we can define a functor $\boxtimes: \Set \times \mathsf{K} \to \mathsf{K}$ in such a way that, $$\mathsf{K}(S \boxtimes k, h) \cong \Set(S,\mathsf{K}( k, h)). $$ For example, the category of modules over a ring has tensors given by the formula $S \boxtimes M := \oplus_{S}M$; indeed it is straightforward to observe that $$R\text{-}\mathsf{Mod}(\oplus_S M, N) \cong \Set(S,R\text{-}\mathsf{Mod}(M, N)). $$ In this case, this follows from the universal property of the coproduct.
\end{rem}

\begin{rem}[The construction of tensors] \label{boxtimesdef}
We shall define a $2$-functor $\boxtimes: \text{Acc}_\omega \times \text{Topoi} \to \text{Topoi}.$ Our construction is reminiscent of the Scott adjunction, and we will see that there is an extremely tight connection between the two. Given a topos $\ce$ and an accessible category with directed colimits $\ca$ we define, $$\ca \boxtimes \ce := \text{Acc}_\omega(\ca, \ce).$$ In order to make this construction meaningful we need to accomplish two tasks:

\begin{enumerate}
    \item show that the construction is well defined (on the level of objects), that is, show that $\text{Acc}_\omega(\ca, \ce)$ is a topos.
    \item describe the action of $\boxtimes$ on functors.
\end{enumerate}

We split these two tasks into two different remarks.
\end{rem}

\begin{rem}[$\text{Acc}_\omega(\ca, \ce)$ is a topos] \label{tensorscottconstructionwelldefined}

By definition $\ca$ must be $\lambda$-accessible for some $\lambda$. Obviously $\text{Acc}_\omega(\ca, \ce)$ sits inside $\lambda\text{-Acc}(\ca, \ce)$. Recall that $\lambda\text{-Acc}(\ca, \ce)$ is equivalent to $\ce^{\ca_\lambda}$ by the restriction-Kan extension paradigm and the universal property of $\mathsf{Ind}_\lambda$-completion.  The inclusion $i: \text{Acc}_\omega(\ca, \ce) \hookrightarrow \ce^{\ca_\lambda}$, preserves all colimits and finite limits, this is easy to show and depends on the one hand on how colimits are computed in this category of functors, and on the other hand on the fact that in a topos directed colimits commute with finite limits. Thus $\text{Acc}_\omega(\ca, \ce)$ amounts to a coreflective subcategory of a topos whose associated comonad is left exact. So by \cite[V.8 Thm.4]{sheavesingeometry}, it is a topos.
\end{rem}

\begin{rem}[Action of $\boxtimes$ on functors] \label{tensor1cells}
Let $f: \ca \to \ce$ be in $\text{Acc}_\omega(\ca, \ce)$ and let $g:\ce \to \cf$ be a geometric morphism. We must define a geometric morphism $$f \boxtimes g: \text{Acc}_\omega(\ca, \ce) \to \text{Acc}_\omega(\cb, \cf).  $$
We shall describe the left adjoint $(f \boxtimes g)^*$ (which goes in the opposite direction ($f \boxtimes g)^*:  \text{Acc}_\omega(\cb, \cf) \to \text{Acc}_\omega(\ca, \ce)$) by the following equation: $$(f \boxtimes g)^*(s)= g^* \circ s \circ f.$$
\end{rem}

\begin{prop} \label{tensored}
Topoi has tensors over $\text{Acc}_\omega$.
\end{prop}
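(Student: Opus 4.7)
The plan is to establish, pseudo-naturally in all three variables, an equivalence
\[
\text{Topoi}(\ca \boxtimes \ce,\, \cf) \;\simeq\; \text{Acc}_\omega(\ca,\, \text{Topoi}(\ce,\cf)),
\]
which is precisely what it means for $\ca\boxtimes\ce$ to be the tensor of $\ce$ by $\ca$ with respect to the enrichment of \Cref{enrichment}. That $\ca \boxtimes \ce$ is even a topos has already been secured in \Cref{tensorscottconstructionwelldefined}, and its action on 1-cells recorded in \Cref{tensor1cells}; the remaining task is the universal property. Specialising $\ce = \Set$ recovers the Scott adjunction \Cref{scottadj}, so my strategy is a direct parameterisation of the argument in \Cref{promenadeproofofscott}.

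The argument is a chain of equivalences. First, reverse the direction of geometric morphisms to replace the left-hand side with $\text{Cocontlex}(\cf, \text{Acc}_\omega(\ca,\ce))$. Second, perform a Fubini-style exchange: both this category and $\text{Acc}_\omega(\ca, \text{Cocontlex}(\cf,\ce))$ identify with the mixed-variance functor category $\text{Cat}_{\text{cocontlex},\text{acc}_\omega}(\cf \times \ca,\, \ce)$ of functors $\cf \times \ca \to \ce$ that are cocontinuous and left exact in the $\cf$-slot and directed-colimit-preserving in the $\ca$-slot. This step is legitimate by the universal property of the $\text{Acc}_\omega$-tensor product (\Cref{univproptensor}, \Cref{constructiontensor}) applied with $\cf$ in one slot, which is valid because $\cf$ itself lies in $\text{Acc}_\omega$ by \Cref{topoihomacc} (directed colimits commute with finite limits in any topos). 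Third, re-fold the $\cf$-variable via $\text{Cocontlex}(\cf,\ce) \simeq \text{Topoi}(\ce,\cf)$ to close the chain. Naturality of the composite is transparent because every step is an (un)currying, and this also pins down the description of $\boxtimes$ on $1$-cells given in \Cref{tensor1cells}: the prescription $(f \boxtimes g)^*(s) = g^* \circ s \circ f$ is precisely what the currying transports.

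The main obstacle I expect is the Fubini step. One must show that the "bilinear" functor category sits correctly inside the ambient presheaf-type categories on both sides and that the currying bijections restrict to it; the reflective construction of $\otimes$ in \Cref{constructiontensor} is engineered exactly for this, so once the universal property of $\ca \otimes \cf$ is unfolded (note $\cf \in \text{Acc}_\omega$ is the essential new input relative to the $\Set$ case), the identifications are formal. A secondary, enrichment-level concern is that the final equivalence must live in $\text{Acc}_\omega$ and not merely in $\Cat$; but in each of the intermediate functor categories, directed colimits are computed pointwise, so every step of the chain is a morphism of $\text{Acc}_\omega$, and the $1$-categorical equivalence automatically upgrades to an equivalence of accessible categories with directed colimits, as required for a genuine enriched tensor.
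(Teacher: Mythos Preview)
Your proposal takes essentially the same route as the paper: the identical chain of equivalences through the mixed-variance category $\text{Cat}_{\text{cocontlex},\text{acc}_\omega}(\cf \times \ca,\ce)$, preceded and followed by the identification $\text{Topoi}(-,-)\simeq\text{Cocontlex}(-,-)$. One small correction: invoking the $\text{Acc}_\omega$-tensor product $\otimes$ to justify the Fubini step is misplaced, since that universal property concerns functors preserving \emph{directed} colimits in both variables, whereas here the $\cf$-slot must be cocontinuous and left exact; the exchange is rather an elementary currying, valid because colimits and finite limits in both $\text{Acc}_\omega(\ca,\ce)$ and $\text{Cocontlex}(\cf,\ce)$ are computed pointwise---a fact you yourself note at the end.
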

\begin{proof}
Putting together the content of \Cref{boxtimesdef}, \Cref{tensorscottconstructionwelldefined} and \Cref{tensor1cells}, we only need to show that $\boxtimes$ has the correct universal property, that is: $$\text{Topoi}(\ca \boxtimes \ce, \cf) \cong \text{Acc}_\omega(\ca, \text{Topoi}(\ce, \cf)). $$ When we spell out the actual meaning of the equation above, we discover that we did all the relevant job in the previous remarks. Indeed the biggest obstruction was the well-posedness of the definition.

\begin{align*} 
\text{Topoi}(\ca \boxtimes \ce, \cf) \cong &  \text{Cocontlex}(\cf, \ca \boxtimes \ce)  \\ 
 \cong & \text{Cocontlex}(\cf, \text{Acc}_\omega(\ca, \ce))  \\
\cong & \text{Cat}_{\text{cocontlex,}\text{acc}_\omega}(\cf \times \ca, \ce) \\
\cong &  \text{Acc}_\omega(\ca, \text{Cocontlex}(\cf,\ce))  \\
 \cong &  \text{Acc}_\omega(\ca, \text{Topoi}(\ce,\cf)).
\end{align*}
\end{proof}

\subsection{The Scott adjunction revisited} \label{ssrevisited}

\begin{rem}[Yet another proof of the Scott adjunction]
Let us start by mentioning that we can re-obtain the Scott adjunction directly from the fact that Topoi is tensored over $\text{Acc}_\omega$. Indeed if we evaluate the equation in \Cref{tensored} when $\ce$ is the terminal topos Set, $$\text{Topoi}(\ca \boxtimes \Set, \cf) \cong \text{Acc}_\omega(\ca, \text{Topoi}(\Set, \cf)) $$  we obtain precisely the statement of the Scott adjunction, $$\text{Topoi}(\mathsf{S}(\ca), \cf) \cong \text{Acc}_\omega(\ca, \mathsf{pt}(\cf)). $$ Being tensored over $\text{Acc}_\omega$ means in a way to have a relative version of the Scott adjunction.
\end{rem}

\begin{rem}\label{idiot}
Among natural numbers, we find extremely familiar the following formula, \[(30 \times 5) \times 6 = 30 \times (5 \times 6).\]
Yet, this formula yields an important property of the category of sets. Indeed $\Set$ is tensored over itself and the tensorial structure is given by the product. The formula above tells us that the tensorial structure of $\Set$ associates over its product.
\end{rem}

\begin{rem}[Associativity of $\boxtimes$ with respect to $\times$]
Recall that the category of topoi has products, but they are very far from being computed as in Cat. Pitts has shown \cite{pitts1985product} that $\ce \times \cf \cong \text{Cont}(\ce^\circ, \cf)$. This description, later rediscovered by Lurie, is crucial to get a slick proof of the statement below. 
\end{rem}

\begin{prop} Let $\ca$ be a finitely accessinble category. Then,
\[\ca \boxtimes (\ce \times \cf) \simeq ( \ca \boxtimes \ce ) \times \cf.\]
\end{prop}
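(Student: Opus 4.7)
Since $\ca$ is finitely accessible, the first step is to write $\ca \simeq \mathsf{Ind}(C)$ with $C \coloneqq \ca_\omega$ a small category. By the universal property of the $\mathsf{Ind}$-completion applied inside $\text{Acc}_\omega$, for any topos $\ch$ there is a natural equivalence
\[\ca \boxtimes \ch \;=\; \text{Acc}_\omega(\mathsf{Ind}(C),\ch)\;\simeq\;\text{Cat}(C,\ch)\;=\;\ch^C,\]
where the right-hand side is simply the ordinary functor topos (that this is indeed a topos was treated in \Cref{tensorscottconstructionwelldefined}). Under this identification the claim reduces to exhibiting a natural equivalence of topoi
\[(\ce\times\cf)^C \;\simeq\; \ce^C\times\cf.\]

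Next I would apply Pitts's description $\ch_1\times\ch_2 \simeq \text{Cont}(\ch_1^\circ,\ch_2)$ to both sides. On the left, using that $C$ is small and that limits in $\cf^C$ are computed pointwise, a $C$-indexed family of continuous functors $\ce^\circ\to\cf$ is the same datum as a single continuous functor $\ce^\circ\to\cf^C$; one obtains
\[(\ce\times\cf)^C \;\simeq\; \text{Cont}(\ce^\circ,\cf)^C \;\simeq\; \text{Cont}(\ce^\circ,\cf^C).\]
On the right, Pitts gives directly
\[\ce^C\times\cf \;\simeq\; \text{Cont}\bigl((\ce^C)^\circ,\cf\bigr).\]
The proof thus reduces to producing a canonical equivalence $\text{Cont}(\ce^\circ,\cf^C)\simeq\text{Cont}((\ce^C)^\circ,\cf)$ and checking it is pseudo-natural in $\ce$ and $\cf$ and compatible with the topos structures.

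The main obstacle is this final two-variable adjunction: one has to match limit-preservation in $\ce^\circ$ with values in the diagram topos $\cf^C$ against limit-preservation on the diagram topos $(\ce^C)^\circ$ with values in $\cf$. The plan is to construct the equivalence by currying: a continuous $F\colon\ce^\circ\to\cf^C$ corresponds to a bifunctor $\ce^\circ\times C\to\cf$ continuous in the first variable, which by precomposition with the canonical $C^\circ$-indexed reassembly $(\ce^\circ)^{C^\circ}\to \ce^\circ\times C$ (available because $C$ is small, i.e., because $\ca$ is finitely accessible) determines a functor $(\ce^C)^\circ\to\cf$; and vice versa. The nontrivial content is verifying that continuity is preserved in both directions and that the assignments are mutually inverse up to equivalence, which ultimately rests on the smallness of $C$ and on the compatibility of finite limits with small (co)limits in a topos, as in \Cref{topoihomacc}. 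Once this equivalence is established, naturality in $\ce,\cf$ follows from the universal characterizations at each step, completing the argument.
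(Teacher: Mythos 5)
Your opening moves are sound and coincide with the paper's: using $\ca\simeq\mathsf{Ind}(C)$ to identify $\ca\boxtimes\ch\simeq\text{Cat}(C,\ch)=\ch^C$, invoking Pitts' description $\ce\times\cf\simeq\text{Cont}(\ce^\circ,\cf)$, and exchanging the small exponent with $\text{Cont}$ via pointwise limits to get $\ca\boxtimes(\ce\times\cf)\simeq\text{Cont}(\ce^\circ,\cf^C)$. The gap is in how you close. By applying Pitts to the right-hand side in the orientation $\ce^C\times\cf\simeq\text{Cont}((\ce^C)^\circ,\cf)$, you have reduced the proposition to an equivalence $\text{Cont}(\ce^\circ,\cf^C)\simeq\text{Cont}((\ce^C)^\circ,\cf)$, and this is exactly where the proposal breaks down: the proposed mechanism does not exist as described. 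There is no canonical ``reassembly'' functor $(\ce^\circ)^{C^\circ}\to\ce^\circ\times C$ (what exists canonically is an evaluation $(\ce^\circ)^{C^\circ}\times C^\circ\to\ce^\circ$), so precomposition does not produce a functor out of $(\ce^C)^\circ$ from a bifunctor $\ce^\circ\times C\to\cf$; any honest comparison here would have to pass through ends/Kan extensions and a nontrivial continuity check, and the statement you would be proving, namely $\ce\times\cf^C\simeq\ce^C\times\cf$ as topoi, is essentially equivalent in strength to the proposition itself (it is most easily deduced \emph{from} it, via $\cf^C\simeq\Set^C\times\cf$). So the key step is not actually carried out, and the sketch as written would fail.

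The fix is short, and it is what the paper does: do not apply Pitts to the right-hand side at all. Having reached $\text{Cont}(\ce^\circ,\cf^C)\simeq\text{Cont}(\ce^\circ,\ca\boxtimes\cf)$, simply read Pitts' theorem backwards to recognize this category as (the underlying category of) the product topos, giving $\ca\boxtimes(\ce\times\cf)\simeq(\ca\boxtimes\cf)\times\ce$. Since the product of topoi is symmetric, this is the stated equivalence up to relabelling $\ce\leftrightarrow\cf$ (the paper's own chain in fact ends with $(\ca\boxtimes\cf)\times\ce$ as well), and naturality is inherited from the universal properties used at each step. With that replacement your argument becomes complete and is then the same proof as the paper's.
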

\begin{proof}
We show it by direct computation.
\begin{align*} 
\ca \boxtimes (\ce \times \cf) \simeq &  \text{Acc}_\omega (\ca, \text{Cont}(\ce^\circ, \cf) )  \\ 
\simeq & \text{Cat}(\ca_\omega, \text{Cont}(\ce^\circ, \cf) ) \\
\simeq & \text{Cont}(\ce^\circ, \text{Cat}(\ca_\omega,\cf)) \\
\simeq & \text{Cont}(\ce^\circ, \text{Acc}_\omega (\ca,\cf)) \\
\simeq & ( \ca \boxtimes \cf ) \times \ce.
\end{align*}
\end{proof}

\begin{rem}
Similarly to \Cref{idiot}, the following display will appear completely trivial,
\[(30 \times 1) \times 6 = 30 \times (1 \times 6).\]
Yet, we can get inspiration from it, to unveil an important simplification of the tensor $\ca \boxtimes \ce$. We will show that it is enough to know the Scott topos $\mathsf{S}(\ca)$ to compute $\ca \boxtimes \ce$, at least when $\ca$ is finitely accessible.
\end{rem}

\begin{prop}[Interaction between $\boxtimes$ and Scott]
Let $\ca$ be a finitely accessinble category. Then,
$$\ca \boxtimes (-) \cong \mathsf{S}(\ca) \times (-).$$
\end{prop}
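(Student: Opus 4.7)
The plan is to deduce this directly from the preceding associativity statement $\ca \boxtimes (\ce \times \cf) \simeq (\ca \boxtimes \ce) \times \cf$ by specializing one of the factors to the terminal topos $\Set$. Recall that $\Set$ is the unit of the Cartesian structure on Topoi, so $\Set \times \cf \simeq \cf$ for every topos $\cf$, and recall also from \Cref{ssrevisited} that $\ca \boxtimes \Set \simeq \mathsf{S}(\ca)$ by the very definition of the Scott topos.

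Concretely, I would fix a topos $\cf$ and compute, using the previous proposition (applied to the finitely accessible category $\ca$, the topos $\ce := \Set$, and the topos $\cf$):
\[
\ca \boxtimes \cf \;\simeq\; \ca \boxtimes (\Set \times \cf) \;\simeq\; (\ca \boxtimes \Set) \times \cf \;\simeq\; \mathsf{S}(\ca) \times \cf.
\]
The first equivalence uses the unit property of $\Set$ in Topoi, the second is the associativity statement, and the third is the Scott identification. Naturality in $\cf$ follows from the pseudonaturality of each of these three equivalences: $\Set \times (-)$ is pseudonaturally equivalent to the identity, the associativity isomorphism is a pseudonatural equivalence of $2$-functors on Topoi, and the identification $\ca \boxtimes \Set \simeq \mathsf{S}(\ca)$ is an equivalence of topoi that can be tensored on the right with $\cf$.

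No step here is a genuine obstacle since the hard work has already been done in the previous proposition (whose proof in turn relied on the Pitts formula $\ce \times \cf \simeq \text{Cont}(\ce^\circ, \cf)$ and on $\ca$ being finitely accessible, so that $\text{Acc}_\omega(\ca, -) \simeq \text{Cat}(\ca_\omega, -)$). The only point to be mildly careful about is to present the result as an equivalence of $2$-functors $\text{Topoi} \to \text{Topoi}$, which amounts to checking functoriality in $\cf$; but since each of the three equivalences above is natural, the composite is natural as well, and this gives the claimed equivalence $\ca \boxtimes (-) \simeq \mathsf{S}(\ca) \times (-)$.
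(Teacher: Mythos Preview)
Your argument is correct and coincides with the paper's own proof, which is the one-line computation $\ca \boxtimes (-) \cong \ca \boxtimes (\Set \times -) \cong (\ca \boxtimes \Set) \times (-)$, using that $\Set$ is the terminal topos and that $\ca \boxtimes \Set = \mathsf{S}(\ca)$. Your additional remarks on naturality in $\cf$ are a welcome elaboration but add nothing new to the strategy.
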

\begin{proof}
$$\ca \boxtimes (-) \cong \ca \boxtimes (\Set \times -) \cong (\ca \boxtimes \Set) \times (-).$$
\end{proof}

\begin{prop}[Powers and exponentiable Scott topoi] \label{powerspowers}
Let $\ca$ be a finitely accessible category. Then Topoi has powers with respect to $\ca$.
Moreover, $\ce^\ca$ is given by the exponential topos $\ce^{\mathsf{S}(\ca)}.$
\end{prop}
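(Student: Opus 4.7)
My approach is a short chain of natural equivalences, leveraging the fact that for finitely accessible $\ca$ the Scott topos $\mathsf{S}(\ca)$ is of presheaf type and therefore exponentiable in Topoi. The plan is to show directly that $\ce^{\mathsf{S}(\ca)}$ has the universal property one requires of a power $\ce^\ca$ in the $\text{Acc}_\omega$-enriched structure on Topoi; this kills two birds with one stone, establishing both existence of the power and its identification with the exponential topos.

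First I would invoke \Cref{trivial} to identify $\mathsf{S}(\ca)$ with the presheaf topos $\Set^{\ca_\omega}$. Since every presheaf topos is locally finitely presentable, hence a continuous category, the main theorem of \cite{cont} guarantees that $\mathsf{S}(\ca)$ is exponentiable in Topoi. Thus the exponential topos $\ce^{\mathsf{S}(\ca)}$ exists, and satisfies the defining adjunction
$$\text{Topoi}(\cf, \ce^{\mathsf{S}(\ca)}) \simeq \text{Topoi}(\cf \times \mathsf{S}(\ca), \ce)$$
pseudo-naturally in $\cf$.

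Next, I would rewrite the right-hand side using the previous proposition, which (still assuming $\ca$ is finitely accessible) identifies $\mathsf{S}(\ca) \times \cf$ with the tensor $\ca \boxtimes \cf$, and then apply the universal property of tensors from \Cref{tensored}. Concatenating gives
$$\text{Topoi}(\cf \times \mathsf{S}(\ca), \ce) \simeq \text{Topoi}(\ca \boxtimes \cf, \ce) \simeq \text{Acc}_\omega(\ca, \text{Topoi}(\cf, \ce)),$$
and together with the first equivalence this produces
$$\text{Topoi}(\cf, \ce^{\mathsf{S}(\ca)}) \simeq \text{Acc}_\omega(\ca, \text{Topoi}(\cf, \ce)),$$
which is exactly the universal property of the power $\ce^\ca$.

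There is essentially no hard step: the only substantive input is the exponentiability of $\mathsf{S}(\ca)$, which is already packaged in \cite{cont} via continuity, and the identification $\mathsf{S}(\ca) \times \cf \simeq \ca \boxtimes \cf$, which has just been established. The remaining work is checking naturality of the displayed equivalences in $\cf$, which is immediate from the naturality of each ingredient, so I would state this briefly rather than grind through it.
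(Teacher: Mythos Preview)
Your proof is correct and follows essentially the same route as the paper: both arguments combine the tensored property $\text{Topoi}(\ca \boxtimes \cf, \ce) \simeq \text{Acc}_\omega(\ca, \text{Topoi}(\cf, \ce))$ with the identification $\ca \boxtimes \cf \simeq \mathsf{S}(\ca) \times \cf$ from the preceding proposition, and then invoke exponentiability of the presheaf topos $\mathsf{S}(\ca)$ via continuity. The only difference is cosmetic---you start from the exponential and verify it has the power's universal property, while the paper starts from the power's universal property and reduces it to that of the exponential.
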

\begin{proof}
The universal property of the power object $\ce^\ca$ is expressed by the following equation,
$$\text{Topoi}(\cf, \ce^\ca) \cong \text{Acc}_\omega(\ca, \text{Topoi}(\cf, \ce)).$$ Now, because we have tensors, this is saying that $\text{Topoi}(\cf, \ce^\ca) \cong \text{Topoi}(\ca \boxtimes \cf, \ce)).$ Because of the previous proposition, we can  gather this observation in the following equation.
$$\text{Topoi}(\cf, \ce^\ca) \cong  \text{Topoi}(\mathsf{S}(\ca) \times \cf, \ce)).$$ This means that $\ce^\ca$ has the same universal property of the topos $\ce^{\mathsf{S}(\ca)}$ and thus exists if and only if the latter exists. By the well known characterization of exponentiable topoi, this happens if and only if $\mathsf{S}(\ca)$ is continuous which is of course true for presheaf categories.
\end{proof}

\begin{rem}
By inspecting the proof above, we see that Topoi has powers with respect to $\ca$ whenever $\mathsf{S}(\ca)$ is a continuous category. Of course, if $\ca$ is finitely accessible this condition is easily met. Yet, the proposition suggests a very natural series of questions. Can we find natural assumptions on $\ca$ such that $\mathsf{S}(\ca)$ is continuous or - less ambitiously - locally finitely presentable or - even less ambitiously - coherent (in the topos-theoretic sense)? 
\end{rem}

\section{Toolbox}\label{toolbox}

This section contains technical results on the Scott adjunction that will be extensively employed for more qualitative results. We study the behavior of $\mathsf{pt}$, $\mathsf{S}$ and $\eta$, trying to discern all their relevant properties. 
Before continuing, we briefly list the main results that we will prove in order to facilitate the consultation.
\begin{enumerate}
\item[\ref{embeddings}] $\mathsf{pt}$ transforms geometric embeddings in fully faithful functors.
\item[\ref{localicmaps}] $\mathsf{pt}$ transforms localic morphisms in faithful functors.
\item[\ref{surjections}] $\mathsf{S}$ maps pseudo-epis (of Cat) to geometric surjections.
\item[\ref{reflective}] $\mathsf{S}$ maps reflections to geometric embeddings.
\item[\ref{topological}] Introduces and studies the notion of topological embeddings between accessible categories.
\item[\ref{ff}] $\eta$ is faithful (and iso-full) if and only if $\ca$ has a faithful (and iso-full) functor into a finitely accessible category.
\end{enumerate}

\subsection{Embeddings \& surjections}

\begin{rem}
Observe that since $\mathsf{S}$ is a left adjoint, it preserves pseudo epimorphisms, analogously $\mathsf{pt}$ preserves pseudo monomorphisms. \Cref{embeddings} and \Cref{surjections} might be consequences of this observation, but we lack an explicit description of pseudo monomorphisms and pseudo epimorphisms in both categories. Notice that, instead, \Cref{reflective} represents a non-trivial behavior of $\mathsf{S}$.
\end{rem}

\subsubsection{On the behavior of $\mathsf{pt}$}
The functor $\mathsf{pt}$ behaves nicely with various notions of \textit{injective} or \textit{locally injective} geometric morphism.
\newline

Geometric embeddings between topoi are a key object in topos theory. Intuitively, they represent the proper notion of subtopos. It is a well known fact that subtopoi of a presheaf topos $\Set^C$ correspond to Grothendieck topologies on $C$ bijectively.

\begin{prop}  \label{embeddings}
 $\mathsf{pt}$ sends geometric embeddings to fully faithful functors.
\end{prop}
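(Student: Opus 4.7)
The plan is to unpack definitions. A geometric embedding $f: \cf \to \ce$ is by definition a geometric morphism whose direct image $f_*$ is fully faithful, equivalently whose counit $\epsilon: f^* f_* \Rightarrow \id_\cf$ is an invertible $2$-cell. From the description of $\mathsf{pt}$ recalled in \Cref{pt}, the functor $\mathsf{pt}(f): \mathsf{pt}(\cf) \to \mathsf{pt}(\ce)$ sends a point $p$ with inverse image $p^*: \cf \to \Set$ to the point with inverse image $p^* \circ f^*$, and sends a $2$-cell $\alpha: p^* \Rightarrow q^*$ to the whiskered $2$-cell $\alpha \star f^* : p^* f^* \Rightarrow q^* f^*$. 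Thus fully faithfulness of $\mathsf{pt}(f)$ amounts to showing that for any two points $p,q$ of $\cf$ the whiskering map
\[ W_{p,q}: \mathrm{Nat}(p^*, q^*) \longrightarrow \mathrm{Nat}(p^* f^*, q^* f^*), \qquad \alpha \longmapsto \alpha \star f^* \]
is a bijection.

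My strategy is to write down an explicit inverse to $W_{p,q}$, using nothing beyond the invertibility of $\epsilon$. Given $\beta : p^* f^* \Rightarrow q^* f^*$, define a candidate $\tilde{\beta} : p^* \Rightarrow q^*$ componentwise by
\[ \tilde{\beta}_F \;:=\; q^*(\epsilon_F) \circ \beta_{f_* F} \circ p^*(\epsilon_F^{-1}), \qquad F \in \cf. \]
Naturality of $\tilde{\beta}$ in $F$ is immediate from the naturality of $\beta$, of $\epsilon$, and the functoriality of $p^*$ and $q^*$.

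The remaining verification that $W_{p,q}$ and $\beta \mapsto \tilde{\beta}$ are mutually inverse is a routine two-sided diagram chase. For $\alpha \mapsto \alpha \star f^* \mapsto \widetilde{\alpha \star f^*} = \alpha$ one applies the naturality of $\alpha$ to the arrow $\epsilon_F: f^* f_* F \to F$. For $\beta \mapsto \tilde{\beta} \mapsto \tilde{\beta} \star f^* = \beta$ one needs $\tilde{\beta}_{f^* G} = \beta_G$ for every $G \in \ce$; this follows by applying the naturality of $\beta$ to the unit $\eta_G: G \to f_* f^* G$ and using the triangle identity $\epsilon_{f^* G} \circ f^*(\eta_G) = \id_{f^* G}$, which together with the invertibility of $\epsilon$ gives $f^*(\eta_G) = \epsilon_{f^* G}^{-1}$. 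I do not expect any real obstacle here, since the whole argument is powered by the single hypothesis that $\epsilon$ is an isomorphism; alternatively one could phrase it as the purely $2$-categorical fact that a $1$-cell in a $2$-category whose right adjoint is fully faithful is representably fully faithful, applied to the representable $\text{Topoi}(\Set,-) = \mathsf{pt}$.
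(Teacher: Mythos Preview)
Your proof is correct and follows essentially the same approach as the paper: both arguments invert the whiskering-by-$f^*$ map by whiskering on the other side by $f_*$ and using that the counit $\epsilon: f^* f_* \Rightarrow \id$ is an isomorphism. The paper argues faithfulness and fullness separately and leaves the fullness half as ``a similar argument'', whereas you package the same idea more explicitly by writing down the inverse $\tilde\beta$ in terms of $\epsilon$ and checking both round-trips; your closing $2$-categorical remark (that a right adjoint being fully faithful makes the left adjoint representably fully faithful) is exactly the abstract principle behind both versions.
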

\begin{proof}
This is a relatively trivial consequence of the fact that the direct image functor is fully faithful but we shall include the proof in order to show a standard way of thinking. Let $i^*: \cg \leftrightarrows \ce: i_*$ be a geometric embedding. Recall, this means precisely that the $\ce$ is reflective in $\cg$ via this adjunction, i.e. the direct image is fully faithful. Let $p,q: \Set \rightrightarrows \ce$ be two points, or equivalently let $p^*, q^*: \ce \rightrightarrows \Set$ be two cocontinuous functors preserving finite limits. And let $\mu, \nu: p^* \Rightarrow q^*$ be two natural transformation between the points. 


The action of $\mathsf{pt}(i)$ on this data is the following. It maps $p^*$ to $p^*i^*$ while $\mathsf{pt}(i)(\mu)$ is defined by whiskering $\mu$ with $i^*$ as pictured by the diagram below.

\begin{center}
\begin{tikzcd}[row sep=2cm]
p^*\ar[d, bend left, Rightarrow, "\nu"]\ar[d, bend right, Rightarrow, "\mu"'] & p^*i^*\ar[d, bend left, Rightarrow, "\mu_{i^*}"]\ar[d, bend right, Rightarrow, "\nu_{i^*}"]\\
q^* & q^*i^*
\end{tikzcd}
\end{center}

Now, observe that $\mu \cong \mu_{i^*i_*}$ because $i^*i_*$ is isomorphic to the identity. This proves that $\mathsf{pt}(i)$ is faithful, in fact  $\mathsf{pt}(i)(\mu) =  \mathsf{pt}(i)(\nu)$ means that $\mu_{i^*} = \nu_{i^*}$, this implies that $\mu_{i^*i_*} = \nu_{i^*i_*}$, and so $\mu = \nu$. A similar argument shows that $\mathsf{pt}(i)$ is full (using that $i_*$ is full).
\end{proof}

Localic topoi are those topoi that appear as the category of sheaves over a locale. Those topoi have a clear topological meaning and represent a quite concrete notion of generalized space. Localic morphisms are used to generalize the notion of localic topos; a localic morphism $f: \cg \to \ce$ attests that there exist an internal locale $L$ in $\ce$ such that $\cg \simeq \mathsf{Sh}(L, \ce)$. In accordance with this observation, a topos $\cg$ is localic if and only if the essentially unique geometric morphism $ \cg \to \Set$ is localic.

\begin{defn} A morphism of topoi $f: \cg \to \ce$ is localic if every object in $\cg$ is a subquotient of an object in the inverse image of $f$.
\end{defn} 

\begin{prop}  \label{localicmaps}
$\mathsf{pt}$ sends localic geometric morphisms to faithful functors.
\end{prop}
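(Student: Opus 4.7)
The plan is to run the exact same kind of whiskering argument used in the proof of \Cref{embeddings}, but to replace the use of fully-faithfulness of $i_*$ by the defining subquotient property of a localic morphism combined with the exactness of points. Let $f: \cg \to \ce$ be localic, and let $\mu, \nu : p^* \Rightarrow q^*$ be two natural transformations between points $p,q: \Set \rightrightarrows \cg$ with $\mathsf{pt}(f)(\mu) = \mathsf{pt}(f)(\nu)$, i.e.\ $\mu_{f^*(e)} = \nu_{f^*(e)}$ for every $e \in \ce$. I would aim to show $\mu_g = \nu_g$ for every $g \in \cg$, which gives faithfulness of $\mathsf{pt}(f)$.

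The first step is to isolate two closure properties of the class $\{g \in \cg : \mu_g = \nu_g\}$. Recall that the inverse image of a point is cocontinuous and left exact, hence both $p^*$ and $q^*$ preserve monomorphisms and epimorphisms. Given a monomorphism $z \hookrightarrow x$, naturality of $\mu$ and $\nu$ gives $q^*(z \hookrightarrow x)\circ \mu_z = \mu_x \circ p^*(z \hookrightarrow x)$ and similarly for $\nu$; since $q^*(z \hookrightarrow x)$ is still mono, the equality $\mu_x = \nu_x$ forces $\mu_z = \nu_z$. Dually, given an epimorphism $x \twoheadrightarrow y$, the epicness of $p^*(x \twoheadrightarrow y)$ lets us cancel on the right, so $\mu_x = \nu_x$ forces $\mu_y = \nu_y$.

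The second step is to invoke the definition of localic morphism: every $g \in \cg$ sits in a span $g \twoheadleftarrow s \hookrightarrow f^*(e)$ for some $e \in \ce$. By hypothesis $\mu_{f^*(e)} = \nu_{f^*(e)}$, so the mono-closure property gives $\mu_s = \nu_s$, and the epi-closure property then gives $\mu_g = \nu_g$. Since $g$ was arbitrary, $\mu = \nu$.

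There is no real obstacle here; the only point to be careful about is that we are using both the left-exactness and the cocontinuity of inverse images of points in the same argument, one for each half of the subquotient. The parallel with \Cref{embeddings} is instructive: there the direct image was fully faithful, so whiskering with $i^*$ could be \emph{undone} by whiskering with $i_*$, giving full-faithfulness of $\mathsf{pt}(i)$; here we can only undo the whiskering up to passing to subquotients, and this is precisely why we get faithfulness but not necessarily fullness.
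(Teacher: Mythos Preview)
Your argument is correct and follows essentially the same strategy as the paper: use naturality together with the fact that inverse images of points preserve monos and epis to propagate the equality $\mu_{f^*(e)} = \nu_{f^*(e)}$ along the subquotient span. In fact your write-up is slightly more complete, since the paper only spells out the epi half of the subquotient argument and relegates the general case to a footnote, whereas you handle both the mono and the epi closure explicitly.
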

\begin{proof}
Consider a localic geometric morphism $f: \cg \to \ce$. We shall prove that $\mathsf{pt}f$ is faithful on points. In order to do so,  let $p,q: \Set \rightrightarrows \ce$ be two points, or equivalently let $p^*, q^*: \ce \rightrightarrows \Set$ be two cocontinuous functors preserving finite limits. And let $\mu, \nu: p^* \Rightarrow q^*$ be two natural transformation between the points. 


We need to prove that if $\mu_{f^*} = \nu_{f^*}$, then $\mu = \nu$. In order to do so, let $e$ be an object in $\ce$. Since $f$ is a localic morphism, there is an object $g \in \cg$ and an epimorphism $l: f^*(g) \twoheadrightarrow e$\footnote{This is not quite true, we know that $e$ is a subquotient of $f^*(g)$, in the general case the proof gets a bit messier to follow, for this reason we will cover in detail just this case.}.

\begin{center}
\begin{tikzcd}[row sep=2cm]
p^*(e)\ar[d, bend left, Rightarrow, "\nu"]\ar[d, bend right, Rightarrow, "\mu"'] & p^*i^*(g)\ar[d, bend left, Rightarrow, "\mu_{i^*g}"]\ar[d, bend right, Rightarrow, "\nu_{i^*g}"] \ar[l, "p^*(l)"]\\
q^*(e) & q^*i^*(g) \ar[l, "q^*(l)"]
\end{tikzcd}
\end{center}

Now, we know that $\mu \circ p^*(l) = q^*(l) \circ \mu_{i^*g}$ and $\nu \circ p^*(l) = q^*(l) \circ \nu_{i^*g}$, because of the naturality of $\mu$ and $\nu$. Since  $\mu_{i^*} = \nu_{i^*}$, we get \[\mu \circ p^*(l) = q^*(l) \circ \mu_{i^*g} = q^*(l) \circ \nu_{i^*g} = \nu \circ p^*(l) .\]

Finally observe that $p^*(l)$ is an epi, because $p^*$ preserves epis, and thus we can cancel it, obtaining the thesis.

\end{proof}

\begin{prop} Let $f: \cg \to \ce$ be a geometric morphism. The following are equivalent.
\begin{itemize}
\item For every point $j: \Set \to \ce$ the pullback $\cg \times_{\ce} \Set$ has a point.
\item $\mathsf{pt}(f)$ is surjective on objects.
\end{itemize}
\end{prop}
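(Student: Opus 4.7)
The plan is to unfold both conditions using the universal property of the $2$-pullback $\cg \times_\ce \Set$ in Topoi, in line with the conventions of \Cref{ignorepseudo}, and to observe that the equivalence reduces to a direct translation between the two formulations. The key fact is that $\Set$ is terminal in Topoi, so its points are essentially unique.

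First I would unfold what $\mathsf{pt}(f)$ does on objects. By the description in \Cref{pt}, the functor $\mathsf{pt}(f)\colon \mathsf{pt}(\cg) \to \mathsf{pt}(\ce)$ sends a point $p\colon \Set \to \cg$ to the composite $f \circ p\colon \Set \to \ce$. Interpreting ``surjective on objects'' in the natural $2$-categorical sense (\emph{i.e.}, essentially surjective, following \Cref{ignorepseudo}), this condition amounts to the statement that for every point $j\colon \Set \to \ce$ there exists a point $p\colon \Set \to \cg$ and an isomorphism $f \circ p \cong j$.

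Next I would compute the points of the pullback topos. Topoi admits pseudo-pullbacks (see \Cref{propoftopoi} and the references there), so $\cg \times_\ce \Set$ exists. By the universal property, a point of $\cg \times_\ce \Set$ — that is, a geometric morphism $\Set \to \cg \times_\ce \Set$ — is the datum of a geometric morphism $p\colon \Set \to \cg$, a geometric morphism $q\colon \Set \to \Set$, and an invertible $2$-cell $f \circ p \cong j \circ q$. Since $\Set$ is terminal in Topoi, $q$ is uniquely isomorphic to $\id_{\Set}$, so the datum collapses to a point $p\colon \Set \to \cg$ together with an isomorphism $f \circ p \cong j$.

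Comparing the two descriptions, the existence of a point of $\cg \times_\ce \Set$ for every choice of $j$ is \emph{verbatim} the essential surjectivity of $\mathsf{pt}(f)$, and both implications follow at once. I do not foresee any real obstacle: the only delicate point is to be careful that the relevant pullback is the pseudo-pullback rather than a strict one, so that points correspond to \emph{isomorphism classes} of factorizations through $f$; this matches the convention already fixed in \Cref{ignorepseudo} and is consistent with the $2$-categorical reading of ``surjective on objects''.
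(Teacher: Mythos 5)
Your proof is correct and is essentially the paper's argument: the paper simply declares the statement ``Trivial'', and the intended justification is exactly your unfolding of the universal property of the pseudo-pullback together with the fact that $\Set$ is terminal in $\text{Topoi}$, so that a point of $\cg \times_\ce \Set$ is precisely a point $p$ of $\cg$ with an isomorphism $f \circ p \cong j$, i.e.\ the (essential, per \Cref{ignorepseudo}) surjectivity of $\mathsf{pt}(f)$ on objects.
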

\begin{proof}
Trivial.
\end{proof}

\subsubsection{On the behavior of $\mathsf{S}$}
The functor $\mathsf{S}$ behaves nicely with respect to epis, as expected. It does not behave nicely with any notion of monomorphism. In the next section we study those accessible functors $f$ such that $\mathsf{S}(f)$ is a geometric embedding.
\newline

\begin{prop}\label{surjections} $\mathsf{S}$ maps pseudo-epis (of Cat) to geometric surjections.
\end{prop}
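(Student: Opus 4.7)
The plan is to exploit the standard characterization of geometric surjections: a geometric morphism $g: \cg \to \ce$ is a surjection if and only if its inverse image $g^*$ is faithful (equivalently, conservative), cf.\ \cite[A4.2.6]{elephant1}. So given $f: \ca \to \cb$ in $\text{Acc}_\omega$ that is a pseudo-epi in Cat, it suffices to verify that $\mathsf{S}(f)^* = f^* : \mathsf{S}\cb \to \mathsf{S}\ca$ is a faithful functor.

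The key point is that by \Cref{defnS} the inverse image $f^*$ is literally precomposition with $f$, namely $g \mapsto g \circ f$. This means that $f^*$ fits into a strictly commutative square of the form
\[
\begin{tikzcd}
\mathsf{S}\cb \arrow[r, "f^*"] \arrow[d, hook] & \mathsf{S}\ca \arrow[d, hook] \\
\text{Cat}(\cb,\Set) \arrow[r, "(-)\circ f"'] & \text{Cat}(\ca,\Set)
\end{tikzcd}
\]
where the vertical inclusions are full, being inclusions of full subcategories (of functors preserving directed colimits). Since the property of a functor being faithful is inherited by restriction to any full subcategory of the source, it suffices to show that $(-)\circ f : \text{Cat}(\cb,\Set) \to \text{Cat}(\ca,\Set)$ is faithful. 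But this is precisely the defining property of $f$ being a pseudo-epi in Cat: 2-cells into $\Set$ are detected by whiskering with $f$. Thus $f^*$ is faithful and $\mathsf{S}(f)$ is a geometric surjection.

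The one subtlety to address is a size issue: $\text{Cat}(\cb,\Set)$ is illegitimate. This is harmless and can be sidestepped using \Cref{generatorscotttopos}: choosing $\lambda$ with $\ca,\cb$ both $\lambda$-accessible, we may replace the ambient large functor categories by the small ones $\Set^{\cb_\lambda}$ and $\Set^{\ca_\lambda}$, into which $\mathsf{S}\cb$ and $\mathsf{S}\ca$ embed fully. Faithfulness of precomposition along $f_\lambda: \ca_\lambda \to \cb_\lambda$ at the target $\Set$ then gives exactly what is needed, and the argument closes without leaving the realm of small functor categories.
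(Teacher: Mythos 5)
Your core argument is correct, and it is genuinely different in character from the paper's, which offers no argument at all but simply delegates to \cite[4.2]{laxepi}. Your route -- the characterization of geometric surjections as those morphisms with faithful inverse image (\cite[A4.2.6]{elephant1}), together with the observation that $\mathsf{S}(f)^*$ is literally the restriction of whiskering with $f$ to the full subcategories of directed-colimit-preserving functors -- is the natural self-contained proof, and the reduction ``faithful precomposition restricts to a faithful functor'' is sound. Note that you only use \emph{half} of the pseudo-epi hypothesis: in the convention this paper evidently adopts (dual to \Cref{ff}, where pseudo-monomorphisms of $\mathrm{Cat}$ are the faithful and iso-full functors), a pseudo-epi has whiskering that is faithful \emph{and} full on invertible $2$-cells, so ``this is precisely the defining property'' is an overstatement; what you actually prove is the slightly stronger statement that $\mathsf{S}$ sends any $1$-cell with faithful whiskering into $\Set$ to a surjection. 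Do make the chosen definition explicit, since under the weaker reading of pseudo-epi ($gf\cong hf$ implies $g\cong h$, with no compatibility demanded) faithfulness of precomposition would not follow and the argument would not start.

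The final paragraph on size is the one place that does not work as written, although nothing essential is lost: a $1$-cell of $\text{Acc}_\omega$ need not preserve $\lambda$-presentable objects, so there is in general no restriction $f_\lambda\colon \ca_\lambda\to\cb_\lambda$, and in any case the pseudo-epi hypothesis concerns $f$ itself, not such a restriction, so ``faithfulness of precomposition along $f_\lambda$'' is not available. Fortunately no repair is needed: since $\ca$ and $\cb$ are themselves large, ``pseudo-epi of Cat'' can only be read in the ($2$-)category of locally small categories, where $\Set$ is a legitimate target, and faithfulness of whiskering with $f$ at target $\Set$ is a statement about equality of $2$-cells that carries no size problem -- the large hom-category $\mathrm{Cat}(\cb,\Set)$ is only a notational device. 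Dropping the last paragraph and stating the pseudo-epi convention leaves a complete proof.
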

\begin{proof}
See \cite[4.2]{laxepi}.
\end{proof}


%
%
%
%

\subsubsection{Topological embeddings}\label{topological}

\begin{defn}
Let $f: \ca \to \cb$ be a $1$-cell in $\text{Acc}_{\omega}$. We say that $f$ is a topological embedding if $\mathsf{S}(f)$ is a geometric embedding.
\end{defn}

This subsection is devoted to describe topological embeddings between accessible categories with directed colimits. The reader should expect such a quest to be highly nontrivial and rather technical, because $\mathsf{S}$ is a left adjoint and is not expected to have nice behavior on any kind of monomorphism. As a fact, we lack of a satisfying characterization of topological embeddings.

Fortunately we will manage to provide some useful partial results. Let us list the lemmas that we are going to prove.
\begin{enumerate}
    \item[\ref{necessary}] a necessary condition for a functor to admit a topological embedding into a finitely accessible category
    \item[\ref{sufficient}] a sufficient and quite easy to check criterion for a functor to be a topological embedding
    \item[\ref{intofinitely}]  a full description of topological embeddings into finitely accessible categories

\end{enumerate}

\begin{rem}
Topological embeddings into finitely accessible categories $i: \ca \to \mathsf{Ind}(C)$ are very important because $\mathsf{S}(i)$ will describe, by definition, a subtopos of $\Set^C$. This means that there exist a topology $J$ on $C$ such that $\mathsf{S}(\ca)$ is equivalent to $\mathsf{Sh}(C,J)$, this leads to concrete presentations of the Scott topos.
\end{rem}

\begin{lem}[A necessary condition]\label{necessary}
If $\ca$ has a fully faithful topological embedding $f: \ca \to \mathsf{Ind}(C)$ into a finitely accessible category, then $\eta_A : \ca \to \mathsf{ptS}(\ca)$ is fully faithful.
\end{lem}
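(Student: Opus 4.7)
The plan is to exploit the naturality of $\eta$ at $f$ and then perform a standard cancellation argument using fully faithful functors. Concretely, naturality of $\eta \colon \id \Rightarrow \mathsf{ptS}$ at $f \colon \ca \to \mathsf{Ind}(C)$ gives (up to equivalence, in the sense of \Cref{ignorepseudo}) a commuting square
\[
\begin{tikzcd}
\ca \ar[r, "f"] \ar[d, "\eta_\ca"'] & \mathsf{Ind}(C) \ar[d, "\eta_{\mathsf{Ind}(C)}"] \\
\mathsf{ptS}(\ca) \ar[r, "\mathsf{ptS}(f)"'] & \mathsf{ptS}(\mathsf{Ind}(C)).
\end{tikzcd}
\]
The strategy is to show that the top-right composite is fully faithful and that $\mathsf{ptS}(f)$ is faithful, so that $\eta_\ca$ must itself be fully faithful.

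For the top-right composite, $f$ is fully faithful by hypothesis, while $\eta_{\mathsf{Ind}(C)}$ is an equivalence because on finitely accessible categories the Scott adjunction restricts to a biequivalence with presheaf topoi (\Cref{trivial}); in particular the unit at $\mathsf{Ind}(C)$ is an equivalence. Therefore $\eta_{\mathsf{Ind}(C)} \circ f$ is fully faithful. For the bottom edge, the hypothesis that $f$ is a topological embedding means exactly that $\mathsf{S}(f)$ is a geometric embedding, and by \Cref{embeddings} this implies $\mathsf{ptS}(f) = \mathsf{pt}(\mathsf{S}(f))$ is fully faithful (in particular faithful).

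Now invoke the elementary cancellation principle: if $G\circ H$ is fully faithful and $G$ is faithful, then $H$ is fully faithful. Faithfulness of $H$ is immediate; for fullness, given $\gamma\colon H(a)\to H(b)$, apply $G$ and use that $G\circ H$ is full to produce $\alpha\colon a\to b$ with $(G\circ H)(\alpha)=G(\gamma)$, and then faithfulness of $G$ forces $H(\alpha)=\gamma$. Applied to $G=\mathsf{ptS}(f)$ and $H=\eta_\ca$, and using that the two composites agree via the commuting square, we conclude that $\eta_\ca$ is fully faithful.

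The only mildly delicate point is 2-categorical bookkeeping: the naturality square commutes up to invertible $2$-cell, not strictly, and ``fully faithful'' has to be read up to equivalence. But this is exactly the kind of slack permitted by \Cref{ignorepseudo}, and all cancellation arguments above go through unchanged once one replaces equalities of morphisms by the appropriate isomorphisms. There is no serious obstacle beyond this.
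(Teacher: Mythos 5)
Your proof is correct and follows essentially the same route as the paper: the naturality square for $\eta$ at $f$, the equivalence $\eta_{\mathsf{Ind}(C)}$ from \Cref{trivial}, full faithfulness of $\mathsf{ptS}(f)$ from \Cref{embeddings}, and cancellation to conclude $\eta_\ca$ is fully faithful. You merely spell out the cancellation step that the paper leaves implicit, which is a welcome clarification but not a different argument.
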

\begin{proof}
Assume that $\ca$ has a topological embedding $f: \ca \to \mathsf{Ind}(C)$ into a finitely accessible category. This means that $\mathsf{S}(f)$ is a geometric embedding. Now, we look at the following diagram.
    \begin{center}
    \begin{tikzcd}
\ca \arrow[rr, "f" description] \arrow[dd, "\eta_\ca"]              &  & \mathsf{Ind}(C) \arrow[dd, "\eta_{\mathsf{Ind}(C)}" description, two heads, hook] \\
                                                                    &  &                                                                                   \\
\mathsf{pt} \mathsf{S}(\ca) \arrow[rr, "\mathsf{ptS}f" description] &  & \mathsf{pt} \mathsf{S}(\mathsf{Ind}(C))                                          
\end{tikzcd}
    \end{center}
\Cref{trivial} implies that $\eta_{\mathsf{Ind}(C)}$ is an equivalence of categories, while \Cref{embeddings} implies that $\mathsf{ptS}(f)$ is fully faithful. Since also $f$ is fully faithful, $\eta_\ca$ is forced to be fully faithful.
\end{proof}


\begin{thm}\label{reflective}\label{sufficient}
Let $i: \ca \to \cb$ be a $1$-cell in $\text{Acc}_\omega$ exhibiting $\ca$ as a reflective subcategory of $\cb$ \[L: \cb \leftrightarrows \ca: i. \] Then $i$ is a topological embedding.
\end{thm}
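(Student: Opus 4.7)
The plan is to exhibit $\mathsf{S}(i)_{*}$ explicitly and verify directly that the counit of $\mathsf{S}(i)^{*} \dashv \mathsf{S}(i)_{*}$ is invertible, which is the defining property of a geometric embedding.

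First I would observe that $L$, being a left adjoint, preserves \emph{all} colimits, in particular directed ones; hence $L$ is itself a $1$-cell in $\text{Acc}_{\omega}$ and precomposition with $L$ yields a functor $L^{*}: \mathsf{S}(\ca) \to \mathsf{S}(\cb)$ defined by $L^{*}(g) = g \circ L$. The well-known $2$-functoriality of $[-, \Set]$ turns the adjunction $L \dashv i$ into an adjunction of precomposition functors on unrestricted functor categories
\[
i^{*}: \Set^{\cb} \leftrightarrows \Set^{\ca}: L^{*},
\]
with unit built from $\eta: \id_{\cb} \to iL$ and counit built from $\epsilon: Li \to \id_{\ca}$. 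Since both $i^{*}$ and $L^{*}$ restrict to the full subcategories $\mathsf{S}(\cb)$ and $\mathsf{S}(\ca)$ (because composites of directed-colimit preserving functors preserve directed colimits), this adjunction restricts to
\[
i^{*}: \mathsf{S}(\cb) \leftrightarrows \mathsf{S}(\ca): L^{*}.
\]
By uniqueness of right adjoints applied to \Cref{defnS}, this restricted adjunction is the Scott adjunction $\mathsf{S}(i)^{*} \dashv \mathsf{S}(i)_{*}$, so $\mathsf{S}(i)_{*} = L^{*}$.

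Next I would chase the counit. For $g \in \mathsf{S}(\ca)$,
\[
\mathsf{S}(i)^{*}\mathsf{S}(i)_{*}(g) \;=\; i^{*}L^{*}(g) \;=\; g \circ L \circ i,
\]
and since $\ca$ is \emph{reflective} in $\cb$, the counit $\epsilon: Li \to \id_{\ca}$ is an isomorphism; whiskering on the left by $g$ yields a natural isomorphism $g \circ L \circ i \cong g$. This witnesses that the counit of $\mathsf{S}(i)^{*} \dashv \mathsf{S}(i)_{*}$ is invertible, which (as recalled in the proof of \Cref{embeddings}) is precisely the condition that the direct image $\mathsf{S}(i)_{*}$ be fully faithful, i.e.\ that $\mathsf{S}(i)$ is a geometric embedding.

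I expect no genuine obstacle here, only a bookkeeping check: namely, that the adjunction $i^{*} \dashv L^{*}$ between presheaf-style functor categories really restricts to the categories of directed-colimit preserving functors. This is a direct verification, much cleaner than trying to compute $\mathsf{S}(i)_{*}$ through the $r_{\cb} \circ \ran_{i} \circ \iota_{\ca}$ formula of \Cref{f_*}. The whole argument ultimately rests on the structural fact that precomposition is $2$-functorial and that reflectivity of $i$ means $Li \cong \id_{\ca}$.
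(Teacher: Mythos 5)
Your proof is correct, and it reaches the same computational core as the paper's argument --- the identification of the direct image $\mathsf{S}(i)_*$ with precomposition by $L$, followed by the observation that the counit of $\mathsf{S}(i)^*\dashv \mathsf{S}(i)_*$ is whiskering with the invertible counit $Li\Rightarrow \id_\ca$ --- but by a genuinely different derivation. The paper obtains $\mathsf{S}(i)_*\cong(-)\circ L$ from the formula of \Cref{f_*}, $i_*\cong r_\cb\circ\ran_i\circ\iota_\ca$: since $i$ has the left adjoint $L$, $\ran_i\cong(-)\circ L$, and $(-)\circ L$ preserves directed colimits, so it is a fixed point of the coreflection $r_\cb$. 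You instead apply the $2$-functoriality of precomposition to the adjunction $L\dashv i$, obtaining $i^*\dashv L^*$, restrict to the Scott topoi, and invoke uniqueness of right adjoints. Your route is more self-contained: it needs neither the right Kan extension (whose existence is delicate in general, as \Cref{f_*} itself warns) nor the coreflection $r_\cb$, whereas the paper's proof doubles as an illustration of its general formula for $f_*$. Two bookkeeping points to tighten: the unrestricted functor categories $\Set^{\ca}$, $\Set^{\cb}$ are illegitimate when $\ca,\cb$ are large (this is why the paper works with the small copresheaf categories $\P(-)$), but your argument survives because the adjunction can be set up directly between $\mathsf{S}(\cb)$ and $\mathsf{S}(\ca)$, with unit and counit given by whiskering with $\eta$ and $\epsilon$ and the triangle identities inherited from $L\dashv i$; and one should record that the counit of the restricted adjunction at $g$ is $g\epsilon$, which is invertible precisely because reflectivity (full faithfulness of $i$) makes $\epsilon$ an isomorphism --- exactly the final step of the paper's computation.
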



 \begin{proof}
 We want to show that $\mathsf{S}(i)$ is a geometric embedding. This is equivalent to show that the counit $i^*i_*(-) \Rightarrow (-)$ is an isomorphism. Going back to \Cref{f_*}, we write down the obvious computations, \[i^*i_*(-) \cong (i^* \circ r_\cb \circ \ran_i \circ \iota_\ca) (-).\] Now, observe that since $i$ has a left adjoint $L$ the operator $\ran_i$ just coincides with $(-) \circ L$, thus we can elaborate the previous equation as follows.
 \[ (i^* \circ r_\cb \circ \ran_i \circ  \iota_\ca)  (-) \cong (i^*\circ r_\cb)(- \circ L),\]
 Now, $(- \circ L)$ will preserve directed colimits because is the composition of a cocontinuous functor with a functor preserving directed colimits. This means that it is a fixed point of $r_\cb$.
 \[ (i^*\circ r_\cb)((-) \circ L) \cong i^*((-) \circ L) \cong  (-) \circ L \circ i \cong (-).\] The latter isomorphism is just the definition of reflective subcategory. This concludes the proof.
 \end{proof}

\begin{thm}\label{intofinitely}
$f: \ca \to \mathsf{Ind}(C)$ is a topological embedding into a finitely accessible category if and only if, for all $p: \ca \to \Set$ preserving directed colimits, the following equation holds (whenever well defined), \[\lan_i(\ran_f(p) \circ i) \circ f \cong p.\]
\end{thm}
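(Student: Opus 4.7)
The plan is to unfold the condition ``$\mathsf{S}(f)$ is a geometric embedding'' into a concrete equation by combining the description of $f_*$ from \Cref{f_*} with an explicit formula for the coreflection $r_{\mathsf{Ind}(C)}$ that becomes available once the codomain is finitely accessible. By definition, $\mathsf{S}(f)$ is a geometric embedding iff the direct image $f_*$ is fully faithful iff the counit $f^* f_* \Rightarrow \id_{\mathsf{S}(\ca)}$ is a natural isomorphism. Since $f^*(-) = (-) \circ f$, this amounts to requiring $f_*(p) \circ f \cong p$ for every $p \in \mathsf{S}(\ca)$. From \Cref{f_*} we already have $f_*(p) \cong r_{\mathsf{Ind}(C)}(\ran_f(p))$ whenever $\ran_f(p)$ exists as a small copresheaf, which is precisely the content of the ``whenever well defined'' clause.

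The crucial step is to identify the coreflection $r_{\mathsf{Ind}(C)}: \P(\mathsf{Ind}(C)) \to \mathsf{S}(\mathsf{Ind}(C))$ in closed form. Let $i: C \hookrightarrow \mathsf{Ind}(C)$ denote the canonical inclusion. By \Cref{trivial}, restriction along $i$ induces an equivalence $\mathsf{S}(\mathsf{Ind}(C)) \simeq \Set^C$ whose inverse is given by left Kan extension $\lan_i$. Under this identification, the inclusion $\iota_{\mathsf{Ind}(C)}$ corresponds to $\lan_i: \Set^C \hookrightarrow \P(\mathsf{Ind}(C))$, and the standard adjunction $\lan_i \dashv (-) \circ i$ transports, by uniqueness of right adjoints, to the formula $r_{\mathsf{Ind}(C)}(H) \cong \lan_i(H \circ i)$.

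Assembling the pieces yields $f^* f_*(p) \cong \lan_i(\ran_f(p) \circ i) \circ f$, so the counit component at $p$ is an isomorphism precisely when $\lan_i(\ran_f(p) \circ i) \circ f \cong p$. Quantifying over all $p$ preserving directed colimits gives the stated equivalence, and the naturality in $p$ of both sides ensures that the componentwise isomorphism assembles into an iso-counit. The only non-formal step is the identification of $r_{\mathsf{Ind}(C)}$ via the Diaconescu-type equivalence of \Cref{trivial}; once that is in place, the rest is pure bookkeeping of adjunctions and Kan extensions.
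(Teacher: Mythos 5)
Your proof is correct and follows essentially the same route as the paper: unfolding ``geometric embedding'' as the counit $f^*f_*\Rightarrow \id$ being invertible, using the formula $f_*\cong r_{\mathsf{Ind}(C)}\circ \ran_f\circ \iota_\ca$ from \Cref{f_*}, and identifying $r_{\mathsf{Ind}(C)}$ with $\lan_i\circ i^*$ (the paper asserts this identification from its diagram, while you justify it via the equivalence $\mathsf{S}(\mathsf{Ind}(C))\simeq \Set^C$ of \Cref{trivial} and uniqueness of right adjoints, which is a welcome extra detail rather than a different argument).
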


\begin{proof}
The result follows from the discussion below.
\end{proof}

\begin{rem}[$f_*$ and finitely accessible categories]

Given a $1$-cell $f: \ca \to \cb$ in $\text{Acc}_{\omega}$, we experienced that it can be quite painful to give an explicit formula for the direct image functor $f_*$. In this remark we improve the formula provided in \Cref{f_*} in the special case that the codomain if finitely accessible. In order to do so we study the diagram of \Cref{f_*}. To settle the notation, call $f: \ca \to \mathsf{Ind}(C)$ our object of study and $i: C \to \mathsf{Ind}(C)$ the obvious inclusion.

\begin{center}
\begin{tikzcd}
\mathsf{S}\ca \arrow[ddd, "\iota_\ca"] \arrow[rr, "f_*" description, bend right] &  & \mathsf{S}\mathsf{Ind}(C) \arrow[ddd, "\iota_{\mathsf{Ind}(C)}"] \arrow[ll, "f^*"'] \\
 &  &  \\
 &  &  \\
{\P(\ca)} \arrow[rr, "\ran_f" description, bend right] \arrow[rr, "\lan_f" description, bend left] &  & {\P(\mathsf{Ind}(C))} \arrow[ll, "f^*" description]
\end{tikzcd}
\end{center}

We are use to this diagram from \Cref{f_*}, where we learnt also the following formula \[f_* \cong r_\cb \circ \ran_f \circ \iota_\ca.\] We now use the following diagram to give a better description of the previous equation.

\begin{center}
\begin{tikzcd}
\mathsf{S}\ca \arrow[ddd, "\iota_\ca"] \arrow[rr, "f_*" description, bend right]                       &  & \mathsf{S}\mathsf{Ind}(C) \arrow[ddd, "\iota_{\mathsf{Ind}(C)}"] \arrow[ll, "f^*"'] \arrow[rr, "i^*" description, two heads, tail, bend right] &  & \Set^C \arrow[ll, "\lan_i" description, two heads, tail] \arrow[llddd, "\lan_i" description] \\
                                                                                                       &  &                                                                                                                                                &  &                                                                                              \\
                                                                                                       &  &                                                                                                                                                &  &                                                                                              \\
{\P(\ca)} \arrow[rr, "\ran_f" description, bend right] \arrow[rr, "\lan_f" description, bend left] &  & {\P(\mathsf{Ind}(C))} \arrow[ll, "f^*" description] \arrow[rruuu, "i^*" description, bend right]                                           &  &                                                                                             
\end{tikzcd}
\end{center}

We claim that in the notations of the diagram above, we can describe the direct image $f_*$ by the following formula, \[f_* \cong \lan_i \circ  i^* \circ \ran_f \circ \iota_\ca,\]

this follows from the observation that $r_{\mathsf{Ind}(C)}$ coincides with $\lan_i \circ  i^*$ in the diagram about.
\end{rem}

\subsection{A study of the unit $\eta_\ca$} 

This section is devoted to a focus on the unit of the Scott adjunction. We will show that good properties of $\eta_\ca$ are related to the existence of \textit{finitely accessible representations} of $\ca$.
A weaker version of the following proposition appeared in \cite[2.6]{simon}. Here we give a different proof, that we find more elegant and provide a stronger statement. These kind of results were the original motivation for the whole theory behind the Scott adjunction, as it is in general very hard to tell whether an accessible category has a finitely accessible representation. More recently, \cite{lieberman2019hilbert} has followed the techniques introduced by \cite{simon} to takle a similar problem.

\begin{prop}\label{ff} The following are equivalent:
\begin{enumerate}
    \item  The unit at $\ca$ of the Scott adjunction $\ca \to \mathsf{pt}  \mathsf{S} \ca$ is faithful (and iso-full);
    \item  $\ca$ admits a faithful (and iso-full) functor $f: \ca \to \mathsf{Ind}(C)$ preserving directed colimits;
\end{enumerate}
\end{prop}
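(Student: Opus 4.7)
Both implications are diagram chases leveraging \Cref{trivial}: the Scott unit $\eta_{\mathsf{Ind}(C)}$ at every finitely accessible category is an equivalence of categories.

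The direction $(2) \Rightarrow (1)$ is the easier one. Given a faithful (and iso-full) $f \colon \ca \to \mathsf{Ind}(C)$ in $\text{Acc}_\omega$, naturality of $\eta$ yields the commutative square
\[
\begin{tikzcd}
\ca \arrow[r, "f"] \arrow[d, "\eta_\ca"'] & \mathsf{Ind}(C) \arrow[d, "\eta_{\mathsf{Ind}(C)}"] \\
\mathsf{pt}\mathsf{S}(\ca) \arrow[r, "\mathsf{pt}\mathsf{S}(f)"'] & \mathsf{pt}\mathsf{S}(\mathsf{Ind}(C)),
\end{tikzcd}
\]
in which the right vertical is an equivalence by \Cref{trivial}. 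Then $\eta_{\mathsf{Ind}(C)} \circ f$ is faithful and iso-full (composition of $f$ with an equivalence), so the same is true of its equal counterpart $\mathsf{pt}\mathsf{S}(f) \circ \eta_\ca$. Faithfulness of a composite descends to its first factor, giving faithfulness of $\eta_\ca$. Iso-fullness descends too: an iso $\phi\colon \eta_\ca(a) \to \eta_\ca(a')$ is pushed by $\mathsf{pt}\mathsf{S}(f)$ to an iso in $\mathsf{pt}\mathsf{S}(\mathsf{Ind}(C))$, which lifts uniquely along the equivalence $\eta_{\mathsf{Ind}(C)}$ to an iso $f(a) \to f(a')$, and then by iso-fullness of $f$ to an iso $\alpha \colon a \to a'$ witnessing iso-fullness of $\eta_\ca$.

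For $(1) \Rightarrow (2)$, I would construct $f$ as a composite $\mathsf{pt}(j) \circ \eta_\ca$, where $j \colon \mathsf{S}(\ca) \hookrightarrow \Set^{D^{\op}}$ is any geometric embedding of the topos $\mathsf{S}(\ca)$ into a presheaf topos, obtained by presenting $\mathsf{S}(\ca)$ as sheaves on a small site $(D, J)$. By \Cref{embeddings}, $\mathsf{pt}(j)$ is fully faithful; by \Cref{trivial}, its codomain $\mathsf{pt}(\Set^{D^{\op}})$ identifies with the finitely accessible category $\mathsf{Ind}(D^{\op})$. Both factors preserve directed colimits (the first by \Cref{pt}, the second as a $1$-cell in $\text{Acc}_\omega$), and a fully faithful functor is in particular faithful and iso-full, properties that are closed under composition; hence $f$ inherits faithfulness and iso-fullness from $\eta_\ca$.

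\textbf{Main difficulty.} There is no substantial obstacle: the heart of the argument, in both directions, is that finitely accessible categories are (via $\mathsf{S}$) essentially presheaf topoi, so that $\eta$ is invertible at them. The only subtle point is the precise bookkeeping of iso-fullness, which reduces cleanly to the fact that fully faithful functors and equivalences both preserve and reflect isomorphisms.
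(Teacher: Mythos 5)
Your route is the same as the paper's in both directions: for $(1)\Rightarrow(2)$ you compose $\eta_\ca$ with $\mathsf{pt}$ of a geometric embedding of $\mathsf{S}(\ca)$ into a presheaf topos and invoke \Cref{embeddings} and \Cref{trivial}; for $(2)\Rightarrow(1)$ you use the naturality square of $\eta$ at $f$ together with the fact that $\eta_{\mathsf{Ind}(C)}$ is an equivalence. The faithfulness claims are handled correctly in both directions (modulo the harmless point that the square commutes only up to natural isomorphism, which does not affect faithfulness or iso-fullness), and your iso-fullness argument in $(1)\Rightarrow(2)$ is fine, since fully faithful functors reflect isomorphisms.

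The gap is in the iso-fullness bookkeeping of $(2)\Rightarrow(1)$. Given an isomorphism $\phi\colon \eta_\ca(a)\to\eta_\ca(a')$, your chase produces $\alpha\colon a\to a'$ with $f(\alpha)=\psi$, where $\psi$ is the lift of $\mathsf{pt}\mathsf{S}(f)(\phi)$ along the equivalence $\eta_{\mathsf{Ind}(C)}$. But for $\alpha$ to \emph{witness} iso-fullness of $\eta_\ca$ you must verify $\eta_\ca(\alpha)=\phi$, and your construction only gives $\mathsf{pt}\mathsf{S}(f)(\eta_\ca(\alpha))=\mathsf{pt}\mathsf{S}(f)(\phi)$ (up to the coherence isomorphism of the square). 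You cannot cancel $\mathsf{pt}\mathsf{S}(f)$: nothing in the hypotheses makes it faithful, since $f$ is only assumed faithful and iso-full, so $\mathsf{S}(f)$ need not be a geometric embedding or localic (compare \Cref{embeddings} and \Cref{localicmaps}). In other words, "lift the iso downstairs and pull it back" does not by itself identify the two isomorphisms upstairs. The paper sidesteps exactly this point by a different bookkeeping: faithful and iso-full functors are precisely the pseudo-monomorphisms in Cat (and, by direct verification, in $\text{Acc}_\omega$); since $\mathsf{pt}\mathsf{S}(f)\circ\eta_\ca\simeq\eta_{\mathsf{Ind}(C)}\circ f$ is a pseudo-monomorphism, its first factor $\eta_\ca$ is one as well, hence faithful and iso-full. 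To repair your version, either invoke (and, as the paper suggests, verify) this characterization and the cancellation property of pseudo-monomorphisms, or otherwise supply the missing identification $\eta_\ca(\alpha)=\phi$; as written, the final step of your chase does not follow.
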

\begin{proof}
\begin{itemize}
    \item[$1) \Rightarrow 2)$] Assume that $\eta_\ca$ is faithful. Recall that any topos admits a geometric embedding in a presheaf category, this is true in particular for $\mathsf{S}(\ca)$. Let us call $\iota$ some such geometric embedding $\iota: \mathsf{S}(\ca) \to \Set^X$. Following \Cref{embeddings} and \Cref{trivial}, $\mathsf{pt}(\iota)$ is a fully faithful functor into a finitely accessible category $\mathsf{pt}(\iota): \mathsf{ptS}(\ca) \to \mathsf{Ind}(X)$. Thus the composition $\mathsf{pt}(\iota) \circ \eta_A$ is a faithful functor into a finitely accessible category \[\ca \to \mathsf{pt}  \mathsf{S} \ca \to \mathsf{Ind}(X).\] Obverse that if $\eta_\ca$ is iso-full, so is the composition $\mathsf{pt}(\iota) \circ \eta_A$.
    \item[$2) \Rightarrow 1)$] Assume that $\ca$ admits a faithful functor $f: \ca \to \mathsf{Ind}(C)$ preserving directed colimits. Now we apply the monad $\mathsf{pt}\mathsf{S}$ obtaining the following diagram.
    \begin{center}
    \begin{tikzcd}
\ca \arrow[rr, "f" description] \arrow[dd, "\eta_\ca"]              &  & \mathsf{Ind}(C) \arrow[dd, "\eta_{\mathsf{Ind}(C)}" description, two heads, hook] \\
                                                                    &  &                                                                                   \\
\mathsf{pt} \mathsf{S}(\ca) \arrow[rr, "\mathsf{ptS}f" description] &  & \mathsf{pt} \mathsf{S}(\mathsf{Ind}(C))                                          
\end{tikzcd}
    \end{center}
  \Cref{trivial} implies that $\eta_{\mathsf{Ind}(C)}$ is an equivalence of categories, thus $\mathsf{ptS}(f) \circ \eta_\ca$ is (essentially) a factorization of $f$. In particular, if $f$ is faithful, so has to be $\eta_\ca$. Moreover, if $f$ is iso-full and faithful, so must be $\eta_\ca$, because this characterizes pseudo-monomorphisms in Cat (and by direct verification also in $\text{Acc}_\omega$).
\end{itemize}
\end{proof}

 \begin{rem}
 If we remove iso-fullness from the statement we can reduce the range of $f$ from any finitely accessible category to the category of sets.

\begin{prop}\label{fff} The following are equivalent:
\begin{enumerate}
    \item $\ca$ admits a faithful functor $f: \ca \to \Set$ preserving directed colimits.
    \item  $\ca$ admits a faithful functor $f: \ca \to \mathsf{Ind}(C)$ preserving directed colimits;
\end{enumerate}
\end{prop}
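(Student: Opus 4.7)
The implication $(1) \Rightarrow (2)$ is immediate: the category $\Set$ is itself finitely accessible, since $\Set \simeq \mathsf{Ind}(\mathsf{FinSet})$, so any faithful directed-colimit-preserving functor $f: \ca \to \Set$ is already of the desired form with $C = \mathsf{FinSet}$.

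The substantive direction is $(2) \Rightarrow (1)$. Suppose $f: \ca \to \mathsf{Ind}(C)$ is faithful and preserves directed colimits, with $C$ a small category. The plan is to collapse $f$ into a single set-valued functor using the dense generator $C \hookrightarrow \mathsf{Ind}(C)$. Concretely, I define
\[
g: \ca \longrightarrow \Set, \qquad g(a) \;:=\; \coprod_{c \in C} \mathsf{Ind}(C)(c, f(a)),
\]
with the obvious action on morphisms (componentwise postcomposition by $f(\alpha)$). This is well-defined because $C$ is small, so the coproduct exists in $\Set$.

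Two verifications remain. First, $g$ preserves directed colimits: each representable $\mathsf{Ind}(C)(c, -): \mathsf{Ind}(C) \to \Set$ preserves directed colimits because $c$ is finitely presentable in $\mathsf{Ind}(C)$; composing with $f$ preserves directed colimits as well; and the coproduct in $\Set$, being itself a colimit, commutes with directed colimits taken pointwise. Second, $g$ is faithful: if $\alpha, \beta: a \to a'$ satisfy $g(\alpha) = g(\beta)$, then on each component indexed by $c \in C$ we obtain equality of the postcomposition maps $\mathsf{Ind}(C)(c, f(\alpha)) = \mathsf{Ind}(C)(c, f(\beta))$; since the image of $C$ is a (dense, hence jointly faithful) generator of $\mathsf{Ind}(C)$, this forces $f(\alpha) = f(\beta)$, and the faithfulness of $f$ yields $\alpha = \beta$.

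The main obstacle one might anticipate is that the construction could lose faithfulness when merging the coordinates of the family $\{\mathsf{Ind}(C)(c, f(-))\}_{c \in C}$ into a single functor; this is precisely why we use the \emph{disjoint} union rather than, say, a quotient. Note also that this argument does \emph{not} extend to preserve iso-fullness, which is consistent with \Cref{ff}: merging into $\Set$ typically creates new isomorphisms between the components, so the analogous statement with ``iso-full and faithful'' in place of ``faithful'' genuinely requires a finitely accessible target.
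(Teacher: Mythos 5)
Your proof is correct and follows essentially the same route as the paper: the paper also takes the faithful directed-colimit-preserving functor $\coprod_{p \in C}\mathsf{Ind}(C)(p,-): \mathsf{Ind}(C) \to \Set$ and composes it with $f$, which is exactly your $g$. Your version merely spells out the verifications (preservation of directed colimits and joint faithfulness of the representables) that the paper leaves implicit.
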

\begin{proof}
The proof is very simple. $1) \Rightarrow 2)$ is completely evident. In order to prove $2) \Rightarrow 1)$, observe that since $\mathsf{Ind}(C)$ is finitely accessible, there is a faithful functor $ \cy :  \mathsf{Ind}(C) \to \Set$ preserving directed colimits given by \[ \cy := \coprod_{p \in C} \mathsf{Ind}(C)(p, -). \]    The composition $g:= \cy \circ f$ is the desired functor into $\Set$.
\end{proof}
\end{rem}

\newpage
\section*{Acknowledgements}
I am indebted to my advisor, \textit{Jiří Rosický}, for the freedom and the trust he blessed me with during these years, not to mention his sharp and remarkably blunt wisdom. I would like to thank \textit{Simon Henry}, for his collaboration in those days in which this project was nothing but an informal conversation at the whiteboard. I am grateful to \textit{Peter Arndt}, for his sincere interest in my research, and the hint of looking at the example of the geometric theory of fields. Finally, I am grateful to the \textit{anonymous referee} for their suggestions and comments.

\section*{Data availability}
Data sharing not applicable to this article as no datasets were generated or analysed during the current study.

\bibliography{thebib}

\begin{thebibliography}{AEBSV01}

\bibitem[AEBSV01]{laxepi}
Jiří Ad{\'a}mek, Robert El~Bashir, Manuela Sobral, and Jiří Velebil.
\newblock On functors which are lax epimorphisms.
\newblock {\em Theory and Appl. Cat}, 8:509--521, 2001.

\bibitem[AJ94]{abramsky1994domain}
Samson Abramsky and Achim Jung.
\newblock Domain theory.
\newblock 1994.

\bibitem[AL18]{anel}
Mathieu Anel and Damien Lejay.
\newblock Exponentiable higher toposes, 2018.

\bibitem[AR94]{adamekrosicky94}
Ji\v{r}\'{i} Ad\'{a}mek and Ji\v{r}\'{i} Rosick\'{y}.
\newblock {\em Locally presentable and accessible categories}, volume 189 of
  {\em London Mathematical Society Lecture Note Series}.
\newblock Cambridge University Press, Cambridge, 1994.

\bibitem[AR18]{presheaves}
Jiří Adámek and Jiří Rosický.
\newblock How nice are free completions of categories?, 2018.

\bibitem[Bek04]{10.2307/30041767}
Tibor Beke.
\newblock Theories of presheaf type.
\newblock {\em The Journal of Symbolic Logic}, 69(3):923--934, 2004.

\bibitem[Bor94a]{borceux_19943}
Francis Borceux.
\newblock {\em Handbook of Categorical Algebra}, volume~3 of {\em Encyclopedia
  of Mathematics and its Applications}.
\newblock Cambridge University Press, 1994.

\bibitem[Bor94b]{borceux_1994}
Francis Borceux.
\newblock {\em Handbook of Categorical Algebra}, volume~1 of {\em Encyclopedia
  of Mathematics and its Applications}.
\newblock Cambridge University Press, 1994.

\bibitem[Bou17]{bourke2017skew}
John Bourke.
\newblock Skew structures in 2-category theory and homotopy theory.
\newblock {\em Journal of Homotopy and Related Structures}, 12(1):31--81, 2017.

\bibitem[Bra14]{brandenburg2014tensor}
Martin Brandenburg.
\newblock Tensor categorical foundations of algebraic geometry.
\newblock {\em arXiv preprint arXiv:1410.1716}, 2014.

\bibitem[Car10]{caramello2010unification}
Olivia Caramello.
\newblock The unification of mathematics via topos theory.
\newblock {\em arXiv preprint arXiv:1006.3930}, 2010.

\bibitem[DL22]{thgeo}
Ivan Di~Liberti.
\newblock Towards higher topology.
\newblock {\em Journal of Pure and Applied Algebra}, 226(3):106838, 2022.

\bibitem[Esp19]{ESPINDOLA2019137}
Christian Espíndola.
\newblock Infinitary first-order categorical logic.
\newblock {\em Annals of Pure and Applied Logic}, 170(2):137 -- 162, 2019.

\bibitem[GS16]{garner2016enriched}
Richard Garner and Michael Shulman.
\newblock Enriched categories as a free cocompletion.
\newblock {\em Advances in Mathematics}, 289:1--94, 2016.

\bibitem[hdl]{350869}
Ivan Di~Liberti (https://mathoverflow.net/users/104432/ivan-di liberti).
\newblock The symmetric monoidal closed structure on the category of
  $\mathcal{F}$-cocomplete categories and $\mathcal{F}$-cocontinuous functors.
\newblock MathOverflow.
\newblock URL:https://mathoverflow.net/q/350869 (version: 2020-01-21).

\bibitem[Hen19]{simon}
Simon Henry.
\newblock An abstract elementary class non axiomatizable in ${L_{\infty ,\kappa
  }}$.
\newblock {\em The Journal of Symbolic Logic}, 84(3):1240–1251, 2019.

\bibitem[HP02]{HYLAND2002141}
Martin Hyland and John Power.
\newblock Pseudo-commutative monads and pseudo-closed 2-categories.
\newblock {\em Journal of Pure and Applied Algebra}, 175(1):141 -- 185, 2002.
\newblock Special Volume celebrating the 70th birthday of Professor Max Kelly.

\bibitem[JJ82]{cont}
Peter Johnstone and André Joyal.
\newblock Continuous categories and exponentiable toposes.
\newblock {\em Journal of Pure and Applied Algebra}, 25(3):255 -- 296, 1982.

\bibitem[Joh02a]{elephant1}
Peter~T. Johnstone.
\newblock {\em Sketches of an Elephant: A Topos Theory Compendium: 1 Volume
  Set}.
\newblock Oxford University Press UK, 2002.

\bibitem[Joh02b]{elephant2}
Peter~T. Johnstone.
\newblock {\em Sketches of an Elephant: A Topos Theory Compendium: 2 Volume
  Set}.
\newblock Oxford University Press UK, 2002.

\bibitem[Kar01]{Karazeris2001}
Panagis Karazeris.
\newblock Categorical domain theory: Scott topology, powercategories, coherent
  categories.
\newblock {\em Theory and Applications of Categories [electronic only]},
  9:106--120, 2001.

\bibitem[Kel82]{kelly1982basic}
Maxwell Kelly.
\newblock {\em Basic concepts of enriched category theory}, volume~64.
\newblock CUP Archive, 1982.

\bibitem[KL89]{kelly1989complete}
G.M. Kelly and F.W. Lawvere.
\newblock {\em On the Complete Lattice of Essential Localizations}.
\newblock University of Sydney. Department of Pure Mathematics, 1989.

\bibitem[Lib20]{thlo}
Ivan~Di Liberti.
\newblock {F}ormal {M}odel {T}heory {\&} {H}igher {T}opology, 2020.
\newblock arXiv:2010.00319.

\bibitem[LM94]{sheavesingeometry}
Saunders~Mac Lane and Ieke Moerdijk.
\newblock {\em Sheaves in Geometry and Logic}.
\newblock Springer New York, 1994.

\bibitem[LR15]{lieberman2015limits}
M~Lieberman and J~Rosický.
\newblock Limits of abstract elementary classes.
\newblock {\em Theory and Applications of Categories}, 30(48):1647--1658, 2015.

\bibitem[LRV19]{lieberman2019hilbert}
Michael Lieberman, Jir{\'i} Rosick{\`y}, and Sebastien Vasey.
\newblock Hilbert spaces and {C}-algebras are not finitely concrete.
\newblock {\em arXiv preprint arXiv:1908.10200}, 2019.

\bibitem[Lur09]{Lurie}
Jacob Lurie.
\newblock {\em Higher Topos Theory (AM-170)}.
\newblock Princeton University Press, 2009.

\bibitem[{Ló}11]{LOPEZFRANCO20112557}
Ignacio {López Franco}.
\newblock Pseudo-commutativity of kz 2-monads.
\newblock {\em Advances in Mathematics}, 228(5):2557 -- 2605, 2011.

\bibitem[Mak88]{makkai88}
Michael Makkai.
\newblock Strong conceptual completeness for first-order logic.
\newblock {\em Ann. Pure Appl. Logic}, 40(2):167--215, 1988.

\bibitem[MP87]{Makkai-Pitts}
M.~Makkai and A.~M. Pitts.
\newblock Some results on locally finitely presentable categories.
\newblock {\em Transactions of the American Mathematical Society},
  299(2):473--496, 1987.

\bibitem[MP89]{Makkaipare}
Michael Makkai and Robert Par{\'{e}}.
\newblock {\em Accessible Categories: The Foundations of Categorical Model
  Theory}.
\newblock American Mathematical Society, 1989.

\bibitem[Pit85]{pitts1985product}
Andrews Pitts.
\newblock On product and change of base for toposes.
\newblock {\em Cahiers de topologie et g{\'e}om{\'e}trie diff{\'e}rentielle
  cat{\'e}goriques}, 26(1):43--61, 1985.

\bibitem[PR13]{colimitacc}
R.~Paré and J.~Rosický.
\newblock Colimits of accessible categories.
\newblock {\em Mathematical Proceedings of the Cambridge Philosophical
  Society}, 155(1):47–50, 2013.

\bibitem[PT91]{porst1991concrete}
H.E. Porst and W.~Tholen.
\newblock Concrete dualities. category theory at work, herrlich and porst eds.
\newblock {\em Heldermann-Verlag}, pages 11--136, 1991.

\bibitem[Vic07]{Vickers2007}
Steven Vickers.
\newblock {\em Locales and Toposes as Spaces}, pages 429--496.
\newblock Springer Netherlands, Dordrecht, 2007.

\end{thebibliography}
\bibliographystyle{alpha}

\end{document}